\documentclass[11pt]{article}
\usepackage[margin=1.1in]{geometry}
\usepackage{amsmath,amssymb,amsthm}
\usepackage[hidelinks]{hyperref}

\newtheorem{theorem}{Theorem}
\newtheorem{lemma}[theorem]{Lemma}
\newtheorem{proposition}[theorem]{Proposition}

\newtheorem{conjecture}[theorem]{Conjecture}
\newtheorem{remark}[theorem]{Remark}

\title{The minimum surface area of $k$ unequal boxes tiling a cube:\\
sharp thresholds, a fault-free law, and a reduction to two dimensions}
\author{Diego Lago G\'omez\\
\small Independent researcher, Spain\\
\small \texttt{diego.lagomez@gmail.com}}
\date{July 2026}

\begin{document}
\maketitle

\begin{abstract}
Let $T(n,k)$ be the minimum total surface area of $k$ axis-aligned
boxes with integer sides and pairwise distinct dimension multisets
whose union is the cube $[0,n]^3$. We determine the column $k=6$
completely: $T(n,6)=8n^2+2n+12$ for $5\le n\le 9$, and
$T(n,6)=8n^2+2n+6$ for all $n\ge 10$, together with the exceptional
values $T(3,6)=100$ and $T(4,6)=148$. The threshold $n=10$ equals
$1+2+3+4$, the least possible sum of four distinct stick lengths, and
the general law holds: for every $k\ge 4$ and every
$n\ge (k-2)(k-1)/2$, $T(n,k)=8n^2+2n+2(k-3)$, with thresholds at the
triangular numbers. Three structural results support and extend
these values. First, a \emph{fault-free law}: the minimum internal
interface of a partition of the cube into six boxes with no fault
plane is exactly $2n^2+n$ for all $n\ge 3$ (OEIS A014105), proved by
an exact accounting of spanning pieces, floating pieces and cube
corners. Second, a \emph{reduction theorem}: within an explicit
range, the three-dimensional problem collapses to a two-dimensional
one, $I(n,k)=n^2+W^*(n,k-1)$, where $W^*(n,m)$ is the minimum
internal wall of a tiling of the $n\times n$ square by $m$ rectangles
of pairwise distinct dimensions; the key ingredient is an
unconditional slab lemma. Third, a \emph{doubling law} in the middle
regime of the 2D problem: $W^*(n,4)=n+4$ for $n=4,5$ and
$W^*(n,5)=n+6$ for $4\le n\le 9$, proved by finite case trees; via
the reduction theorem this gives computer-free proofs of the middle
regimes of the columns $k=5$ and $k=6$. The lower bound for the main
family does not use the distinctness of the pieces.
\end{abstract}

\section{Introduction}

A classical corollary of the work of Brooks, Smith, Stone and Tutte on
squared squares is that a cube cannot be dissected into finitely many
pairwise unequal cubes~\cite{bsst}. Relaxing cubes to boxes makes such
dissections abundant, and a natural extremal question appears: how
cheaply, in terms of total surface area, can a cube be cut into $k$
rectangular boxes that are pairwise unequal? For $k$ boxes with
integer sides, define
\[
T(n,k)=\min\Big\{\textstyle\sum_{j=1}^{k}2(x_jy_j+y_jz_j+z_jx_j)\Big\},
\]
the minimum taken over partitions of the cube $[0,n]^3$ into
axis-aligned boxes $x_j\times y_j\times z_j$ whose sorted dimension
triples are pairwise distinct. The sequence of values $T(n,k)$ is
recorded as OEIS array A393267; the column $k=4$ was settled by the
author in A392884, and $k=5$ in A393104.

Writing $I$ for the total area of internal interfaces, one has
$T(n,k)=6n^2+2I$, so the problem is to minimize the internal
interface. This paper proves four theorems.

\begin{theorem}[column $k=6$]\label{thm:main}
For all $n\ge 10$,
\[
T(n,6)=8n^2+2n+6,
\]
that is, the minimal internal interface equals $n^2+n+3$. Moreover the
optimal partitions are exactly: one block $(n-1)\times n\times n$, one
slab $1\times(n-1)\times n$, and a column $1\times 1\times n$ cut into
four sticks $1\times 1\times \ell_i$ with distinct lengths
$\ell_1+\ell_2+\ell_3+\ell_4=n$. In particular the number of optimal
partitions, up to symmetry, equals the number of partitions of $n$
into four distinct parts. For $5\le n\le 9$ one has
$T(n,6)=8n^2+2n+12$, and $T(3,6)=100$, $T(4,6)=148$.
\end{theorem}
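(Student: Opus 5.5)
The plan is to work throughout with the internal interface $I=(T-6n^2)/2$. The statement for $n\ge10$ then reads $I(n,6)=n^2+n+3$, and for $5\le n\le9$ it reads $I(n,6)=n^2+n+6$. The proof splits into an upper-bound construction and a lower bound, and the lower bound is organised by whether the partition has a fault plane, that is, a plane $x=c$ (or $y=c$, $z=c$) with $0<c<n$ that cuts no piece.

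\textbf{Upper bound.} I take the stated configuration. The cut between the block $(n-1)\times n\times n$ and the rest contributes $n^2$. The cut between the slab $1\times(n-1)\times n$ and the column $1\times1\times n$ contributes $n$. The three cuts inside the column contribute $1$ each, giving $3$ in total. The dimension multisets are pairwise distinct once $n\ge 3$ and the $\ell_i$ are distinct with $\ell_i\ne n$, and this is possible exactly when $n\ge1+2+3+4=10$. For $5\le n\le9$ I use a variant in which the column carries three distinct sticks and the sixth piece is placed elsewhere at an extra cost of $3$, for example by splitting the slab. For $n=3,4$ I exhibit explicit partitions reaching $100$ and $148$.

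\textbf{Lower bound.} First suppose there is no fault plane. The fault-free law stated in the abstract gives $I\ge 2n^2+n$, which exceeds $n^2+n+6$ for $n\ge3$ with room to spare; small $n$ may need a direct check. Now suppose there is a fault plane, say $x=c$. It costs $n^2$ and splits the cube into two slabs of thicknesses $c$ and $n-c$, carrying $a$ and $b$ pieces with $a+b=6$. If both slabs have thickness at least $2$, I bound each slab's internal cost from below; every face that is not the $x$-fault must then cost on the order of $2n$ or more, which is already too expensive once $n\ge5$. That leaves a slab of thickness $1$, and this is the slab lemma / reduction theorem setting: $I=n^2+W^*(n,5)$, where the thick block is a single piece and the thin slab is tiled like an $n\times n$ square by $5$ distinct rectangles. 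The thick slab must be a single piece, since otherwise its extra internal wall costs at least $n$ and the 2D part still costs at least $n$.

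So the problem reduces to computing $W^*(n,5)$. For $4\le n\le 9$ the doubling law gives $W^*(n,5)=n+6$, which yields $n^2+n+6$. For $n\ge10$ I need $W^*(n,5)\ge n+3$. In a rectangle tiling with no full-length cut the wall is at least $2n$, too large. So there is a full cut of length $n$, leaving a strip of width $w$. If $w\ge2$, the strip side costs at least $n$ more. If $w=1$, the remaining square part is essentially one rectangle, and the strip is cut into $4$ pieces by $3$ unit cuts, giving $n+3$. Repeating the argument gives the $W^*(n,5)$ analogue of the main law. Equality forces four distinct sticks summing to $n$, and this gives the characterisation of the optimal partitions and their count as partitions of $n$ into four distinct parts.

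\textbf{Main obstacle.} I expect the hardest step to be the rigorous reduction to the thin-slab case for all $n\ge10$. In particular I must exclude mixed configurations in which the pieces on the two sides of the fault plane share the cost more cleverly, and I must handle the distinctness constraints across the two slabs. My first approach is an induction on the number of pieces using the slab lemma, charging each additional piece at least $1$ of interface and each full cut of a thick slab at least $n$. The exceptional small values $n=3,4$ will be settled by finite case analysis.
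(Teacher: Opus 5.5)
Your architecture matches the paper's: discard fault-free partitions, reduce a fault-plane partition to one block plus a thickness-one slab, then use the 2D bound for $n\ge 10$, the doubling law for $4\le n\le 9$, and search for $n=3$. However, three concrete steps fail as written.

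First, your middle-regime construction does not reach $8n^2+2n+12$. Three distinct sticks need $n\ge 1+2+3=6$, and cutting the $1\times(n-1)\times n$ plate costs at least $n-1$. So your variant has $I\ge n^2+n+2+(n-1)=n^2+2n+1>n^2+n+6$ for $6\le n\le 9$, and it does not exist at $n=5$. A tiling that works cuts the thin slab into $(n-2)\times n$, $2\times(n-3)$, $1\times 3$, $1\times 2$, $1\times 1$. Its wall is $n+2+3+1=n+6$, and the pieces are pairwise distinct for $n\ge 5$.

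Second, the 2D step for $n\ge 10$ contains a false claim. A strip of width $w\ge 2$ does not necessarily ``cost at least $n$ more'': two pieces of an $n\times w$ strip can be separated by a segment of length $w$. Also, ``$w=1$ makes the rest one rectangle'' is unproved. Use connectivity instead, as in Lemma~\ref{lem:2d}: $j$ integer rectangles tiling a rectangle have wall at least $j-1$. So a fault line gives $W\ge n+(m_1-1)+(m_2-1)=n+3$, with no distinctness needed. The equality case then needs its own short check. Two multi-piece sides cost at least $n+w_1+w_2=2n$. A strip carrying four pieces with wall exactly $3$ must have width $1$; width $2$ would force two $1\times 1$ cells in one column and vertical wall $4$.

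Third, and decisive for the middle regime, your dismissal of everything other than ``single block plus thin slab'' is too weak exactly at $n=4,5$, where the allowed excess over $n^2$ is only $n+6$. A bound ``of order $2n$'' for a side of thickness $b\ge 2$ gives $2n<n+6$ there. ``At least $n$ for the thick side plus at least $n$ for the 2D side'' gives $n^2+2n$, i.e.\ $24<26$ and $35<36$. And if the thin slab is itself the single piece, the 2D side contributes $0$.

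Split by piece counts instead, as in Lemma~\ref{lem:cascade}:
\begin{itemize}
\item If both sides carry at least two pieces, use that a $t\times n\times n$ box with at least two pieces has interface at least $tn$: either its cheapest internal fault, or the column bound inside, which gives $2tn$. Hence $I\ge 2n^2$.
\item If one side is a single piece and the other has thickness $b\ge 2$, you need the full Slab lemma bound $I_B\ge 2n+j-3=2n+2$. Its piece-count term is exactly what makes $2n+2\ge n+6$ hold down to $n=4$. For $n\ge 10$ the crude $bn\ge 2n\ge n+3$ suffices, which is how the paper argues there.
\end{itemize}

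Note also that $n=4$ is not exceptional: $148=8\cdot 16+8+12$ is covered by Theorem~\ref{thm:double}, with $(*)$ holding with equality. Only $n=3$ really needs the search. There $k=6>n+2$, and five distinct rectangles cannot tile the $3\times 3$ square, since their areas sum to at least $1+2+3+4+6=16>9$. Moreover the fault-free bound $2n^2+n=21$ lies below the true $I(3,6)=23$, so your reduction says nothing at $n=3$.
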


\begin{theorem}[fault-free law]\label{thm:ff}
For every $n\ge 3$, the minimum internal interface over partitions of
the cube $[0,n]^3$ into six boxes admitting \emph{no fault plane} is
exactly
\[
I_{\mathrm{ff}}(n) \;=\; 2n^2+n \;=\; \binom{2n+1}{2},
\]
the $n$-th second hexagonal number (OEIS A014105). Distinctness of
the pieces is not required.
\end{theorem}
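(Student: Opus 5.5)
We prove the two inequalities $I_{\mathrm{ff}}(n)\le 2n^2+n$ and $I_{\mathrm{ff}}(n)\ge 2n^2+n$ separately. Throughout, the internal interface is $I=\tfrac12\big(\sum_j \mathrm{area}(\partial B_j)-6n^2\big)$. Equivalently, $I$ is the total area of the union of internal faces, grouped by the three axis directions.

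\emph{Upper bound (construction).} We want a fault-free six-box partition with interface exactly $2n^2+n$. The natural candidate is a pinwheel-type (``windmill'') configuration. Four or five large boxes wind around the cube so that no plane cuts through, and a small piece near a corner or the centre breaks the remaining symmetry. Concretely, take the classical 3D pinwheel of boxes of the form $a\times b\times c$ with the thin dimension equal to $1$ in rotating directions. Choose the sizes so that the interface in each axis direction is roughly $n^2$, $n^2$ and $n$ respectively. We then verify directly that no plane $x=t$, $y=t$ or $z=t$ with $0<t<n$ avoids all box interiors, and compute $I$ face by face. I would first try to find such a configuration by searching small $n$ ($n=3,4$) by hand for an interface of $21$ or $36$, and then generalise the pattern linearly in $n$.

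\emph{Lower bound.} This follows the accounting announced in the abstract. Call a box \emph{spanning} in direction $e$ if its extent in $e$ is all of $[0,n]$, and \emph{floating} if it touches neither of the two opposite cube faces in some direction. The key steps are as follows.

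(i) The interface area in direction $e$ is at least the area of the projection onto $e^\perp$ of the interfaces, counted with multiplicity. Each line parallel to $e$ through the cube meets $\#\{\text{boxes hit}\}-1$ interfaces, so
\[
I_e=\int_{[0,n]^2}\big(N_e(p)-1\big)\,dp,
\]
where $N_e(p)$ is the number of boxes met by the line.

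(ii) Fault-freeness forces, for each direction $e$, that no box family splits the cube along $e$. In particular, at most a limited number of boxes span in each direction, and the non-spanning boxes must cover the $e$-lines over a region of area at least about $n^2$. This should give $I_e\gtrsim n^2$ for at least two directions.

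(iii) The eight cube corners are each owned by a box. With only six boxes and no fault plane, a counting argument forces some direction to contribute an extra strip of area at least $n$ through the boxes meeting the corners. Floating pieces contribute positively to all three $I_e$.

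Summing over the three directions then gives $I\ge 2n^2+n$. Distinctness is never used.

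The main obstacle is step (iii), together with making (ii) sharp. We must exhaust the combinatorial types of fault-free six-box partitions, namely how many pieces span each direction and which pieces contain which corners, and show in every type that the deficit from $3n^2$ is at most $n^2-n$. I would attack this by case analysis on the number of spanning pieces: $0$, $1$, or $2$ or more in a single direction. In the last case a spanning pair reduces the problem to a fault-free 2D rectangle tiling, where the known minimum wall of the 2D windmill supplies the $+n$ term.
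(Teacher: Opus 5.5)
Your proposal is a plan rather than a proof, and both directions have genuine gaps. For the upper bound you exhibit no partition. Checking $n=3,4$ by hand and ``generalising'' proves nothing for all $n\ge3$. The pinwheel also has to be genuinely three-dimensional: the extruded planar pinwheel already has $I=n(2n+c_w+c_h)\ge 2n^2+2n$ with five pieces, and refining only adds interface. What is needed is an explicit family valid for every $n\ge3$, with every candidate fault plane checked. The paper uses a block-and-plate family: the $x$-spanning block $[0,n]\times[1,n]\times[0,n-1]$, the $y$-spanning plate $[1,n]\times[0,n]\times[n-1,n]$, and the anchored pieces $[0,1]^3$, $[1,n]\times[0,1]\times[0,n-1]$, $[0,1]\times[0,1]\times[1,n]$, $[0,1]\times[1,n]\times[n-1,n]$. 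The column $[0,1]\times[0,1]\times[1,n]$ crosses $z=n-1$, the only plane that would otherwise be a fault. Nothing floats, and $U=n(n-1)$ gives $I=3n^2-U=2n^2+n$. Your rotating idea can be realized too. The pieces $[0,n]\times[0,n-1]^2$, $[0,1]\times[0,n]\times[n-1,n]$, $[1,n]\times[n-1,n]\times[0,n]$, $[1,n]\times[0,n-1]\times[n-1,n]$, $[0,1]\times[n-1,n]\times[0,n-2]$ and the floating unit cube $[0,1]\times[n-1,n]\times[n-2,n-1]$ have one spanning piece per axis, $U=n^2-n+1$ and $E=1$, hence also $I=2n^2+n$.

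For the lower bound, step (i) is correct, but (ii) and (iii) are announced rather than proved, and (ii) fails as stated. The column count does not give $I_e\ge n^2$, even up to lower-order terms, in two separate directions. For even $n\ge4$, take $[0,n]\times[0,\tfrac n2]^2$, $[0,\tfrac n2]\times[0,n]\times[\tfrac n2,n]$, $[\tfrac n2,n]^2\times[0,n]$ and the two leftover octant cubes, one of them cut by a $z$-plane. This is a fault-free six-box partition with $I_x=I_y=\tfrac34n^2$.

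What holds is a bound on the sum, and it rests on a fact you never state. In a fault-free partition no piece has two sides equal to $n$, since one of its faces would be an interior fault plane. So the spanning families of the three directions are disjoint, $U_x+U_y+U_z\le n^2$, and $I\ge\sum_e(n^2-U_e)\ge 2n^2$. The extra $n$ needs three further ingredients absent from your sketch:
\begin{itemize}
\item The exact identity $I=3n^2-U+E$, with $E$ the total shadow of pieces floating in each direction. In a column not under a spanning piece, the bottom and top cells lie in different pieces and everything between them floats. This reduces the claim to $n^2-U+E\ge n$, and so to the case $E\le n-1$.
\item The corner census. When $E\le n-1$, each spanning piece is transversally anchored and holds exactly two cube corners, an anchored non-spanning piece holds one, and a floating piece none. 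So $8=2s+a$, $s\in\{2,3,4\}$, and exactly $s-2$ pieces float.
\item A case analysis over the pattern $(s_x,s_y,s_z)$.
\end{itemize}

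Your split by how many pieces span a single direction says nothing about the mixed patterns $(1,1,0)$, $(1,1,1)$, $(2,1,0)$, $(2,1,1)$, $(2,2,0)$. That is where the work lies; both optimal families above are mixed. The pure patterns need specific arguments as well. $(3,0,0)$ needs the lemma that four corner rectangles tiling a square leave a full cut, because $(b_A-b_D)(c_A-c_B)=0$. $(4,0,0)$ forces $E\ge n$.

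Finally, reducing a same-direction spanning pair to a fault-free planar tiling whose windmill wall supplies the $+n$ is unfounded. The other four pieces need not be prisms in that direction. The $+n$ actually comes from the uncovered area $n^2-U$ (spanning cross-sections have sides at most $n-1$) together with $E$.
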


\begin{theorem}[reduction to 2D]\label{thm:red}
Let $W^*(n,m)$ denote the minimum internal wall length of a tiling of
the square $[0,n]^2$ by $m$ rectangles with integer sides and
pairwise distinct dimension pairs (defined whenever such a tiling
exists). Let $n\ge 3$ and $4\le k\le n+2$ be such that $W^*(n,k-1)$
is defined and
\[
W^*(n,k-1)\;\le\; 2n+k-4. \tag{$*$}
\]
Then
\[
I(n,k) \;=\; n^2 + W^*(n,k-1), \qquad
T(n,k) \;=\; 6n^2 + 2\big(n^2+W^*(n,k-1)\big).
\]
Condition $(*)$ holds automatically in the high regime
($W^*=n+k-3$) and in the middle regime ($W^*=n+2k-6$, since
$k\le n+2$).
\end{theorem}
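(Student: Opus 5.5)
We prove the two inequalities separately; the upper bound is a construction and the lower bound is a case split according to whether the partition has a fault plane.

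\textbf{Upper bound.} Take an optimal tiling of the square $[0,n]^2$ by $k-1$ rectangles $a_j\times b_j$ with pairwise distinct dimension pairs and total internal wall $W^*(n,k-1)$. Extrude it to height $n-1$, obtaining boxes $a_j\times b_j\times(n-1)$, and cap it with a slab $n\times n\times 1$. The interface consists of the cap, of area $n^2$, and the extruded walls, of area $(n-1)W^*$. This is too large, so extrusion in full height is the wrong construction. Instead use the structure visible in Theorem~\ref{thm:main}. Take a block $(n-1)\times n\times n$, whose interface with the rest has area $n^2$. The remaining slab $1\times n\times n$ is then tiled by $k-1$ boxes $1\times a_j\times b_j$, coming from a 2D tiling of its $n\times n$ face; the internal interface inside the slab is $1\cdot W^*$. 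The total is $n^2+W^*(n,k-1)$.

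We must still check that all $k$ pieces are distinct. The sorted triples of the slab pieces contain a $1$. The big block is $(n-1,n,n)$, which has no side equal to $1$ when $n\ge 3$, so it differs from all of them. Two slab pieces $1\times a\times b$ are equal exactly when the pairs $\{a,b\}$ coincide, which the 2D tiling excludes. Hence $I(n,k)\le n^2+W^*$.

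\textbf{Lower bound, case with a fault plane.} Suppose some plane $x=c$ (say) cuts every piece it meets along faces. It splits the cube into slabs of widths $c$ and $n-c$, and the interface along it has area exactly $n^2$. Each slab is partitioned by its own pieces. If one side is a single piece and the other side, of width $w$, contains $k-1$ pieces, we project onto the $n\times n$ face. I expect the unconditional slab lemma announced in the abstract to state that a partition of an $n\times n\times w$ slab into $m$ pairwise distinct boxes has internal interface at least $W^*(n,m)$, and we would prove it as follows. Either every piece spans the full width $w$, in which case the partition is an extruded 2D tiling with interface $w\cdot W\ge W^*$; note that distinctness of the boxes gives distinctness of the rectangles because all boxes share the third side $w$. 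Or some piece does not span the width, and then there is an internal wall parallel to the big face of area at least (roughly) $n$ times something. The bound $(*)$, $W^*\le 2n+k-4$, is exactly what makes such non-extruded configurations cost more. This part is the \textbf{main obstacle}: we must show that any horizontal cut inside the slab, together with the remaining walls, costs at least $2n+k-4$. We would first try a counting of pieces touching each of the two big faces, using $k\le n+2$. If instead both sides of the fault contain at least two pieces, each side's internal interface is at least $n$ (a wall inside an $n\times n$ cross-section has area $\ge n$), and we would bound the total against $n^2+2n+k-4$ by recursing on the number of pieces.

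\textbf{Lower bound, fault-free case.} For a partition with no fault plane we invoke a general-$k$ analogue of Theorem~\ref{thm:ff}: a fault-free partition has interface at least about $2n^2$. For $n\ge 3$ this exceeds $n^2+2n+k-4\ge n^2+W^*$, because $k\le n+2$ gives $n^2+2n+k-4\le n^2+3n-2<2n^2$. For $k\le 5$, fault-free partitions cannot exist, so this case is vacuous. For $k\ge 6$ we would need the argument of Theorem~\ref{thm:ff} (spanning and floating pieces) to extend, which gives at least $2n^2+n$ in that setting as well.

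Combining the three parts gives $I(n,k)=n^2+W^*(n,k-1)$, and $T(n,k)=6n^2+2I(n,k)$ gives the formula for $T(n,k)$. Finally, condition $(*)$ holds in the two regimes named in the statement. In the high regime $W^*=n+k-3\le 2n+k-4$ since $n\ge1$. In the middle regime $W^*=n+2k-6\le 2n+k-4$ is equivalent to $k\le n+2$.
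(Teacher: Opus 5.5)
There is a genuine gap exactly where you flagged the main obstacle. It covers two cases: the fault-plane case in which the side carrying $k-1$ pieces has thickness $b\ge2$, and the case in which both sides carry at least two pieces.

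Your sketched route does not close it. A piece that fails to span the thickness only forces \emph{some} wall parallel to the big faces, possibly of area $1$, not of area about $n$. So the dichotomy ``extruded, or expensive'' is not established. The right intermediate statement is also not $I\ge W^*(n,m)$ but a distinctness-free bound, which $(*)$ then compares with $W^*$.

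The missing idea is to slice the slab into unit layers (Lemma~\ref{lem:slab}). If some plane perpendicular to the thickness is a fault, it alone costs $n^2\ge 2n+j-3$ (using $j\le n+2$, $n\ge3$). Otherwise:
\begin{itemize}
\item each of the $b$ unit layers induces a tiling $Q_\ell$ of the $n\times n$ square by $j_\ell\ge2$ rectangles, and $I_y+I_z=\sum_\ell w(Q_\ell)$;
\item Lemma~\ref{lem:2d}, which needs no distinctness, gives $w(Q_\ell)\ge n+j_\ell-2$;
\item $\sum_\ell j_\ell=\sum_i t_i\ge j+b-1$, because each of the $b-1$ internal planes must be crossed by some box.
\end{itemize}
Hence $I\ge b(n-1)+j-1\ge 2n+j-3$, which for $j=k-1$ is $2n+k-4$. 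Only at this point does $(*)$ enter, to turn $2n+k-4$ into a bound $\ge W^*(n,k-1)$.

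The two-sided case needs the same kind of per-side bound, $n+k_i-2$: Lemma~\ref{lem:2d} for a side of thickness $1$, and the slab bound $2n+k_i-3$ for thickness $\ge2$. These sum to $2n+k-4$. Your ``each side costs at least $n$'' gives only $2n$, which falls short of $2n+k-4$ as soon as $k\ge5$, and ``recursing on the number of pieces'' is not an argument.

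The fault-free case also needs repair, though the fix is easy. Your claim that fault-free partitions do not exist for $k\le5$ is false. For any $0<a,b,c<n$, the boxes
$[0,n]\times[0,b]\times[0,c]$, $[0,a]\times[0,n]\times[c,n]$, $[a,n]\times[b,n]\times[0,n]$, $[0,a]\times[b,n]\times[0,c]$ and $[a,n]\times[0,b]\times[c,n]$
tile the cube. Every interior coordinate plane is crossed by the piece spanning the corresponding direction, so there is no fault plane. For $n=3$, $a=b=c=1$ the five pieces are even pairwise distinct, so this case lies inside the theorem's range.

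Nor do you need any general-$k$ version of Theorem~\ref{thm:ff}, which the paper proves only for six pieces. The column bound, Lemma~\ref{lem:columns}, gives $I\ge2n^2$ for every fault-free partition with at least two pieces, for every $k$. Then $2n^2\ge n^2+W^*(n,k-1)$ follows from $W^*\le 2n+k-4\le 3n-2\le n^2$, exactly as you computed.

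Your upper bound and your verification of $(*)$ in the two regimes are correct. Delete the discarded extrusion attempt from the upper bound.
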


\begin{theorem}[doubling law, $m=4,5$]\label{thm:double}
$W^*(n,4)=n+4$ for $n\in\{4,5\}$, and $W^*(n,5)=n+6$ for
$4\le n\le 9$. Consequently, by Theorem~\ref{thm:red}, the middle
regimes
\[
I(n,5)=n^2+n+4\ (n=4,5), \qquad I(n,6)=n^2+n+6\ (4\le n\le 9)
\]
hold by proof, without computer search.
\end{theorem}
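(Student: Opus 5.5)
The plan is to prove matching upper and lower bounds for $W^*(n,4)$ and $W^*(n,5)$, and then apply Theorem~\ref{thm:red}. The upper bounds are explicit constructions.

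\textbf{Upper bounds.} For $m=4$, cut off a strip $1\times n$ and split the remaining $(n-1)\times n$ region by a full-length cut of length $n-1$. That gives wall $n+(n-1)$, which is too large, so instead use a different shape: a block $(n-1)\times(n-a)$, a strip, and an L-shaped arrangement whose wall totals $n+4$, and check distinctness for $n=4,5$ directly. For $m=5$, take a $1\times n$ strip together with an $(n-1)\times n$ block separated by a wall of length $n$, and then nest small rectangles in a corner so that each added piece costs exactly $2$. Adjusting a pinwheel-like corner configuration should realise $n+6$ with pairwise distinct dimension pairs for $4\le n\le 9$. The construction is written once, parametrised by $n$, and distinctness is checked by a small table.

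\textbf{Lower bounds.} The argument classifies tilings by whether a \emph{fault line} (a full cut of length $n$) exists.
\begin{enumerate}
\item If a fault exists, the square splits into an $a\times n$ and a $(n-a)\times n$ rectangle tiled by $m_1+m_2=m$ pieces. The wall is $n$ plus the internal walls of the two sub-rectangles. Here I would use the elementary bound that a rectangle tiled by $p\ge2$ pieces has internal wall at least the shorter side times the number of independent cuts. More precisely, each piece beyond the first costs at least $1$, and a piece of width $1$ forces a cost equal to its length. Distinctness forbids repeating the same thin strip, and this forces an additional $+1$ per piece beyond the budget. This case tree branches on $(m_1,m_2)$ and on the orientation of the sub-faults.
\item If no fault exists, the standard pinwheel argument applies: a fault-free tiling of a rectangle needs at least $5$ pieces, each side of the square meets at least two pieces, and the four corner pieces together with the central one have walls summing to at least $2n$ roughly. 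For $m=4$ this case is empty. For $m=5$ it is exactly the pinwheel, whose wall is at least $2n-?$ and which exceeds $n+6$ once we account for the integrality of the central piece, except possibly for small $n$, which we check by hand.
\end{enumerate}

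\textbf{Main obstacle.} I expect the hard step to be the fault case for $m=5$ with a split $1+4$ or $2+3$, where the cheapest configurations use width-$1$ sticks whose lengths must be distinct. There the bound $n+6$ comes from showing that, with lengths summing to at most $n$ under distinctness, the number of cuts doubles relative to the equal-size regime. The key inequality is that $j$ distinct positive parts of a strip of length $\le n$ need $j-1$ unit cuts plus at least one extra transverse cut unless $n$ is large; this is the origin of the ``doubling'' $2(m-3)$. I would enumerate the finite trees for each $n$ in the range, using $n\le9$ to kill the configurations where four distinct sticks fit ($1+2+3+4=10$), which explains the upper limit.

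\textbf{Conclusion.} Finally, the high and middle regimes both satisfy $(*)$ ($n+4\le 2n+1$ for $n\ge 3$, and $n+6\le 2n+2$ for $n\ge4$), and $k\le n+2$ holds. Theorem~\ref{thm:red} with $k=5,6$ then yields the stated values of $I$.
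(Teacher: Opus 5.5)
Your skeleton (fault versus no fault, splits $(m_1,m_2)$, pinwheels, the threshold $1+2+3+4=10$, then Theorem~\ref{thm:red} via $(*)$) matches the paper's. However, the lower bound, which is the real content of the statement, is never established, and both general tools you propose for the fault case fail.

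\textbf{The ``elementary bound'' is false.} You claim that a rectangle tiled by $p$ pieces has wall at least its shorter side times the number of cuts. The $n\times 3$ strip tiled by $(n-1)\times 3$, $1\times 1$, $1\times 2$ has wall $3+1=4<2\cdot 3$. This is exactly the strip of the optimal tiling $(n,n-3),(n-1,3),(1,1),(1,2)$, so your bound would push its wall from $n+4$ to $n+6$.

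\textbf{The ``key inequality'' is too weak.} Here the error goes the other way. Counting $j-1$ unit cuts plus one extra transverse cut of unspecified length gives only $n+(m-2)+1$, i.e.\ $n+3$ and $n+4$ instead of $n+4$ and $n+6$. (The surcharge to be proved is $m-2$, not $2(m-3)$.)

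\textbf{The missing idea.} Distinctness bounds the \emph{length} of that extra cut. Two pieces stacked in one column share their width, so their heights differ, and the column has height at least $1+2=3$; hence so does the cut beside it. For $m=4$ this, together with $W\ge 2n$ for a $2+2$ split and the fact that three equal-height columns need $n\ge 6$, is essentially the whole proof. For $m=5$ you must run the analogous checks branch by branch in the strip height $h_2$. For example, in height $2$, three sticks over a two-row span need span $L\ge 3$, so that strip costs $L+3\ge 6$.

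\textbf{No-fault branch.} With no fault line, Lemma~\ref{lem:2d} gives only $W\ge 2n$, which misses $n+6$ at $n=4,5$. You need the exact pinwheel value $W=2n+c_w+c_h\ge 2n+2$ for a central piece $c_w\times c_h$. This is immediate from $W=\sum_i(a_i+b_i)-2n$.

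\textbf{The $2+3$ split at $n=4$.} The generic bound here is $n+h_1+(h_2+1)=2n+1$, one short at $n=4$. The branch $h_1=1$, $h_2=3$ dies only by distinctness \emph{across} the fault: the $4\times 1$ strip would need $(1,1),(3,1)$, repeating the $(1,1)$ on the other side, or $(2,1),(2,1)$. Bounding the two sub-rectangles independently, as you propose, cannot close this branch.

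\textbf{Upper bounds.} These are not constructions. An ``L-shaped arrangement'' is not a rectangle. Keeping the $1\times n$ strip and the $(n-1)\times n$ block separated by a length-$n$ wall cannot reach $n+6$ for $6\le n\le 9$: any cut of the block costs at least $n-1$, giving $W\ge 2n+1$, and an uncut block needs four distinct sticks summing to $n$. Explicit tilings that work are:
\begin{itemize}
\item $(n,n-3),(n-1,3),(1,1),(1,2)$ for $m=4$;
\item $(n,n-2),(n-3,2),(3,1),(2,1),(1,1)$ for $m=5$ and $5\le n\le 9$;
\item $(2,3),(2,1),(2,2),(1,1),(3,1)$ for $m=5$ and $n=4$.
\end{itemize}
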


The threshold $n\ge 10$ in Theorem~\ref{thm:main} is the inequality
$n\ge 1+2+3+4$: four distinct positive stick lengths must sum to $n$.
For $5\le n\le 9$ the column of sticks is impossible and the optimum
rises by exactly $6$; Theorem~\ref{thm:double} now proves this middle
regime, and the exhaustive certificates of
Section~\ref{sec:certificates}, which established it first, remain as
independent verification.

The interfaces $n^2+n+(k-3)$ appearing in the family coincide with
the classical plane-region counting polynomials, OEIS A002061,
A014206, A027688: the same quadratics count regions of circle and
line arrangements. We do not know a bijective explanation. The
fault-free minimum $2n^2+n$, on the other hand, is explained exactly
by the accounting of Section~\ref{sec:ff}: it is $3n^2$ minus a
maximum saving of $n(n-1)$ columns.

\section{Upper bound}\label{sec:upper}

\begin{lemma}\label{lem:construction}
For every $k\ge 4$ and every $n\ge (k-2)(k-1)/2$,
\[
T(n,k)\le 8n^2+2n+2(k-3).
\]
\end{lemma}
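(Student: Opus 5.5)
The plan is to generalize the construction described in Theorem~\ref{thm:main}. We exhibit one explicit partition of $[0,n]^3$ into $k$ boxes with pairwise distinct sorted dimension triples, and compute its internal interface. Since $T(n,k)=6n^2+2I$, it suffices to reach
\[
I=n^2+n+(k-3).
\]

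The partition consists of three kinds of piece.
\begin{itemize}
\item The block $B=[1,n]\times[0,n]\times[0,n]$, of dimensions $(n-1)\times n\times n$.
\item The slab $S=[0,1]\times[1,n]\times[0,n]$, of dimensions $1\times(n-1)\times n$.
\item The column $C=[0,1]\times[0,1]\times[0,n]$, cut by planes $z=\text{const}$ into $m=k-2$ sticks $1\times1\times\ell_i$. The lengths $\ell_1<\dots<\ell_m$ are distinct, positive, and satisfy $\sum\ell_i=n$.
\end{itemize}
Such lengths exist exactly when $n\ge 1+2+\dots+(k-2)=(k-2)(k-1)/2$, which is the hypothesis. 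For instance, take $\ell_i=i$ for $i<m$ and let $\ell_m=n-\binom{m}{2}$ absorb the excess, which is $\ge m$. The total number of pieces is $2+m=k$.

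Next we check distinctness of the sorted triples.
\begin{itemize}
\item The sticks have sorted triples $(1,1,\ell_i)$, which are distinct because the $\ell_i$ are.
\item The slab is $(1,n-1,n)$. It differs from every stick because $n-1\ge 2$ whenever $n\ge 3$, and $k\ge4$ forces $n\ge3$.
\item The block $(n-1,n,n)$ has smallest side $n-1\ge 2>1$, so it differs from the slab and the sticks.
\end{itemize}
The one small case to watch is a stick equal to $(1,1,n)$, which would require $m=1$. This is excluded since $m\ge 2$.

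Then we compute the interface by summing contact areas.
\begin{itemize}
\item Between $B$ and $S\cup C$, along the plane $x=1$: area $n\cdot n=n^2$.
\item Between $S$ and $C$, along the plane $y=1$ on $[0,1]\times[0,n]$: area $n$.
\item Between consecutive sticks: $m-1=k-3$ unit squares.
\end{itemize}
Thus $I=n^2+n+k-3$, and $T=6n^2+2I=8n^2+2n+2(k-3)$. One can alternatively sum the individual surface areas $2(xy+yz+zx)$ directly as a cross-check.

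There is no real obstacle here. The only points needing care are the distinctness edge cases, namely making sure the slab is not a stick and the block is not the slab, and confirming that the threshold $(k-2)(k-1)/2$ is exactly the condition for the stick lengths to exist.
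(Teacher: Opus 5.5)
Your construction is exactly the paper's: a block $(n-1)\times n\times n$, a slab $1\times(n-1)\times n$, and a $1\times 1\times n$ column cut into $k-2$ sticks of distinct lengths summing to $n$, giving interface $n^2+n+(k-3)$. Your proof is correct, and your explicit distinctness checks (slab versus sticks via $n-1\ge 2$, block via smallest side $n-1\ge 2$) fill in details the paper only asserts.
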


\begin{proof}
Cut off a block $(n-1)\times n\times n$ with one plane of area $n^2$.
Cut the remaining slab $1\times n\times n$ into a plate
$1\times(n-1)\times n$ and a column $1\times 1\times n$, at cost $n$.
Cut the column into $k-2$ sticks $1\times 1\times\ell_i$ with distinct
lengths summing to $n$; each cut costs $1$, and $k-3$ cuts are used.
Distinct lengths exist if and only if $n\ge 1+2+\dots+(k-2)$. All $k$
pieces have pairwise distinct dimension multisets, and the interface
totals $n^2+n+(k-3)$.
\end{proof}

\section{Lower bound, fault-free case: the column bound}\label{sec:faultfree}

A \emph{fault plane} of a partition is a plane $x_d=i$ with
$0<i<n$ that meets the interior of no piece. In this section we prove
that partitions without fault planes are far more expensive than the
optimum; the bound does not depend on $k$.

\begin{lemma}\label{lem:nodouble}
In a box partition of the cube with at least two pieces and no fault
plane, no piece has two of its dimensions equal to $n$.
\end{lemma}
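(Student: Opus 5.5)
We argue by contradiction: suppose some piece $P$ has two of its dimensions equal to $n$, and show that a fault plane must then exist. After permuting coordinates we may take $P=[a,b]\times[0,n]\times[0,n]$ with $0\le a<b\le n$. Because the partition has at least two pieces, $P$ is not the whole cube, so $a>0$ or $b<n$. By the symmetry $x\mapsto n-x$ we may assume $b<n$. The candidate fault plane is $x=b$, which satisfies $0<b<n$.

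The key step is to show that no piece meets the plane $x=b$ in its interior. Suppose a piece $Q=[c,d]\times[e,f]\times[g,h]$ had $c<b<d$. Its $y$- and $z$-projections are nondegenerate intervals inside $[0,n]$, so $Q$ and $P$ share the open set $(\max(a,c),b)\times(e,f)\times(g,h)$. This set is nonempty because $c<b$ and $a<b$. Two distinct pieces of a partition have disjoint interiors, so this is a contradiction. Hence $x=b$ meets the interior of no piece and is a fault plane, contradicting the hypothesis.

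If instead $b=n$, then $a>0$ and the same argument applied to the plane $x=a$ gives the contradiction.

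There is no real obstacle here. The only care needed is the definition of a fault plane: it must satisfy $0<i<n$, which is why we need $P\neq$ cube (so that one face of $P$ is interior) and integrality of the face coordinate, which holds automatically since all sides are integers.
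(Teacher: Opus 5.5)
Your proof is correct and is essentially the paper's argument. A piece spanning two full directions is not the whole cube, so one of its faces perpendicular to the third direction is interior, and no other piece can cross that face without overlapping the spanning piece; you make this overlap explicit with a nonempty open box, where the paper phrases it via the unit cells on the inner side of the face.
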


\begin{proof}
Suppose a piece occupies $[x_0,x_0+a]\times[0,n]\times[0,n]$ with
$a<n$. Any of its two faces $x=x_0$, $x=x_0+a$ that is interior to the
cube is a fault plane, because every unit cell on the inner side of
such a face belongs to the piece, so no other piece can cross it.
Since $a<n$, at least one of the two faces is interior.
\end{proof}

\begin{lemma}[Column bound]\label{lem:columns}
Every box partition of the cube $[0,n]^3$ with at least two pieces
and no fault plane has internal interface $I\ge 2n^2$.
\end{lemma}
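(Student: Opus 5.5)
The plan is a column-counting argument: express the internal interface as a sum over unit columns in each of the three axis directions, and then use Lemma~\ref{lem:nodouble} together with a volume comparison to bound how many columns can be ``free'' (met by a single piece).

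\textbf{Step 1 (column decomposition).} Fix a direction $d\in\{1,2,3\}$. The internal interface splits as $I=I_1+I_2+I_3$, where $I_d$ is the area of interface lying in planes perpendicular to the $x_d$-axis. I will partition the cube into the $n^2$ unit columns parallel to the $x_d$-axis, indexed by a unit cell $Q$ of the transverse square $[0,n]^2$. A column meeting $c(Q)$ distinct pieces crosses exactly $c(Q)-1$ unit squares of interface perpendicular to $d$. Hence
\[
I_d=\sum_Q \big(c(Q)-1\big)\;\ge\; n^2-A_d,
\]
where $A_d$ is the number of columns with $c(Q)=1$. Every other column contributes at least $1$.

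\textbf{Step 2 (identify the free columns).} A column with $c(Q)=1$ lies entirely inside one piece. That piece therefore has side $n$ in direction $d$. Conversely, each piece of side $n$ in direction $d$, with cross-section $a\times b$, accounts for exactly $ab$ such columns. So $A_d=\sum_{P\in\mathcal S_d}\mathrm{area}(\pi_d(P))$, where $\mathcal S_d$ is the set of pieces spanning direction $d$ and $\pi_d$ is projection to the transverse square. Summing over $d$ gives
\[
I\ge 3n^2-(A_1+A_2+A_3).
\]

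\textbf{Step 3 (bound the savings).} By Lemma~\ref{lem:nodouble}, since there is no fault plane and at least two pieces, no piece has two sides equal to $n$. Consequently the sets $\mathcal S_1,\mathcal S_2,\mathcal S_3$ are pairwise disjoint. A piece in $\mathcal S_d$ has volume $n\cdot\mathrm{area}(\pi_d(P))$. Because the pieces are interior-disjoint inside the cube,
\[
n(A_1+A_2+A_3)=\sum_d\sum_{P\in\mathcal S_d}\mathrm{vol}(P)\le n^3,
\]
so $A_1+A_2+A_3\le n^2$. Substituting into Step 2 gives $I\ge 3n^2-n^2=2n^2$.

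The only delicate point is Step 3: the disjointness of the $\mathcal S_d$ is exactly where Lemma~\ref{lem:nodouble}, and hence the fault-free hypothesis, enters. Without it, a slab $[x_0,x_0+a]\times[0,n]^2$ would be counted in two directions and the volume bound would fail. I expect this step to be the main obstacle only in the sense of checking it carefully. The rest is bookkeeping. Sharpening this saving to $n(n-1)$ is what should later give the exact fault-free value $2n^2+n$, but that refinement is not needed here.
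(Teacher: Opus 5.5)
Your proposal is correct and follows essentially the same argument as the paper. You count interface along unit columns in each direction, identify the single-piece columns with the cross-sections of spanning pieces, and use Lemma~\ref{lem:nodouble} to make the three spanning families disjoint so that the volume bound gives total savings at most $n^2$.
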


\begin{proof}
Fix a direction $d$ and consider the $n^2$ unit columns of the cube in
direction $d$. If the column meets $m$ pieces it contributes exactly
$m-1$ unit squares to the interface perpendicular to $d$, so
$I_d=\sum_{\text{columns}}(m-1)$. A column has $m=1$ if and only if
the piece containing it spans the full length $n$ in direction $d$;
the columns with $m=1$ are therefore exactly those through the
cross-sections of the $d$-spanning pieces, whose total area is
$U_d=\sum v_j/n$, summed over the $d$-spanning pieces $j$ of volumes
$v_j$. Hence
\[
I_d\;\ge\; n^2-U_d .
\]
By Lemma~\ref{lem:nodouble} no piece spans two directions, so the
three sets of spanning pieces are disjoint and
$U_x+U_y+U_z\le \frac{1}{n}\sum_j v_j = n^2$. Adding the three
directions, $I\ge 3n^2-(U_x+U_y+U_z)\ge 2n^2$.
\end{proof}

Since $2n^2> n^2+n+3$ for all $n\ge 3$, fault-free partitions can
never attain the optimum. The exact fault-free minimum is the subject
of the next section.

\section{The fault-free law}\label{sec:ff}

Exhaustive enumeration gives the fault-free six-piece minima $21, 36,
55, 78$ for $n=3,4,5,6$: exactly $2n^2+n$. This section proves
Theorem~\ref{thm:ff}. Throughout, the partition has six boxes and no
fault plane; distinctness is not assumed. Call a piece
\emph{$d$-spanning} if its extent in direction $d$ is $[0,n]$, and
\emph{spanning} if it is $d$-spanning for some $d$ (by
Lemma~\ref{lem:nodouble}, for exactly one $d$). Let $s$ be the number
of spanning pieces, $U=U_x+U_y+U_z$ the total area of their
cross-sections, and $V_0=n^3-nU$ the total volume of the non-spanning
pieces. Call a piece \emph{floating in $d$} if its extent in
direction $d$ touches neither face of the cube, and let
\[
E \;=\; \sum_{(P,d)\,:\,P \text{ floats in } d} \mathrm{sh}_d(P),
\]
where $\mathrm{sh}_d(P)$ is the area of the shadow of $P$
perpendicular to $d$ (the product of its other two dimensions).

\begin{lemma}[floating identity]\label{lem:float}
$I = 3n^2 - U + E = 2n^2 + V_0/n + E$.
\end{lemma}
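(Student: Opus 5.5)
The plan is a direction-by-direction double count of piece faces. Fix a direction $d$ and write $I_d$ for the part of the internal interface perpendicular to $d$, so that $I=I_x+I_y+I_z$. I will prove the per-direction identity
\[
I_d \;=\; n^2 - U_d + E_d, \qquad E_d=\sum_{P \text{ floats in } d}\mathrm{sh}_d(P),
\]
and then sum over the three directions.

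\emph{Step 1 (face counting).} Each piece $P$ has two faces perpendicular to $d$, each of area $\mathrm{sh}_d(P)$. The two boundary faces $x_d=0$ and $x_d=n$ of the cube have total area $2n^2$, and each is tiled exactly by the faces of the pieces lying on it. Every interior unit square of interface perpendicular to $d$ is covered by exactly two piece faces, one from each side. Hence
\[
2\sum_P \mathrm{sh}_d(P) \;=\; 2n^2 + 2I_d, \qquad\text{so}\qquad I_d=\sum_P \mathrm{sh}_d(P)-n^2 .
\]

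\emph{Step 2 (splitting the shadow sum).} I split the pieces according to how many of the faces $x_d=0$ and $x_d=n$ they touch.
\begin{itemize}
\item A piece touching both faces is exactly a $d$-spanning piece, and its shadow is its cross-section.
\item A piece touching exactly one face is non-spanning and non-floating in $d$.
\item A piece touching neither face is floating in $d$.
\end{itemize}
The face $x_d=0$ is tiled by the pieces touching it, so their shadows sum to $n^2$; the same holds for $x_d=n$. Adding these two tilings counts every piece touching at least one face, with the spanning pieces counted twice. Therefore the sum of $\mathrm{sh}_d$ over the pieces touching at least one face equals $2n^2-U_d$. Adding the floating pieces gives
\[
\sum_P\mathrm{sh}_d(P)=2n^2-U_d+E_d,
\]
and with Step 1 this yields $I_d=n^2-U_d+E_d$.

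\emph{Step 3 (summing and rewriting).} Summing over $d$ gives $I=3n^2-U+E$, since $E=E_x+E_y+E_z$ by definition. For the second form, I use Lemma~\ref{lem:nodouble}: each spanning piece spans exactly one direction, so it is counted in exactly one $U_d$. Its volume is $n$ times its cross-section, so the spanning pieces have total volume $nU$. This gives $V_0=n^3-nU$, that is, $U=n^2-V_0/n$. Substituting yields $I=2n^2+V_0/n+E$.

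I expect no serious obstacle. The only point needing care is the bookkeeping in Step 2, namely that spanning pieces appear in both boundary tilings, which is what produces the $-U_d$ term. I also need the disjointness supplied by Lemma~\ref{lem:nodouble}, so that $nU$ really is the total volume of the spanning pieces in Step 3.
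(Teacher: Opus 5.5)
Your proof is correct and reaches the same per-direction identity $I_d=n^2-U_d+E_d$ as the paper, but it organizes the count differently.

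The paper counts along the $n^2$ unit columns in direction $d$. A column meeting $m$ pieces contributes $m-1$. Columns under a $d$-spanning shadow have $m=1$. Every other column has $m=2+f$: a bottom piece, a distinct top piece, and the $f$ floating pieces whose shadows cover it.

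You instead count piece faces globally. The per-direction form of $T=6n^2+2I$ gives $I_d=\sum_P\mathrm{sh}_d(P)-n^2$, and inclusion--exclusion over the two boundary tilings of $x_d=0$ and $x_d=n$ evaluates the shadow sum. The two arguments are the same double count read two ways, since $\sum_{\text{columns}}m=\sum_P\mathrm{sh}_d(P)$.

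Your version is slightly more economical. It needs only that each boundary face of the cube is tiled by the pieces touching it. It avoids the column-by-column claims that the bottom and top pieces differ and that floating shadows avoid spanning shadows.

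The paper's column form has a different advantage: it localizes the interface, zero over spanning shadows and $1+f$ per column elsewhere. The configuration analysis later reuses that local picture, for instance when it argues that every column over the complement $C$ needs a bottom piece and a top piece.

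In Step 3, invoking Lemma~\ref{lem:nodouble} is the right justification for reading $n^3-nU$ as the total volume of the non-spanning pieces.
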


\begin{proof}
Fix a direction $d$ and count along columns as in
Lemma~\ref{lem:columns}. A column under the shadow of a $d$-spanning
piece has $m=1$ and contributes $0$. In any other column, no piece
spans it, so exactly one piece contains its bottom cell (anchored at
the bottom face), a different piece contains its top cell, and the
pieces strictly inside the column are precisely those floating in $d$
whose shadow covers the column: $m = 2+f$. Note also that a floating
piece's shadow never meets a $d$-spanning piece's shadow (the
spanning piece fills its columns entirely). Summing $m-1$ over
columns,
\[
I_d = (n^2-U_d) + E_d, \qquad
E_d = \sum_{P \text{ floats in } d}\mathrm{sh}_d(P),
\]
and adding the three directions gives the identity, using
$V_0/n = n^2-U$.
\end{proof}

By Lemma~\ref{lem:float}, Theorem~\ref{thm:ff}'s lower bound
$I\ge 2n^2+n$ is equivalent to the \emph{sharp inequality}
\[
U \;\le\; n^2-n+E. \tag{S}
\]
If $E\ge n$ then (S) follows from $U\le n^2$
(Lemma~\ref{lem:columns}); so assume $E\le n-1$ from now on.

\begin{lemma}[anchoring and corner census]\label{lem:census}
Assume $E\le n-1$. Then every spanning piece is anchored (touches a
face) in both its transverse directions, and contains exactly two
corners of the cube; every fully anchored non-spanning piece contains
exactly one corner; every piece floating in some direction contains
none. Consequently
\[
8 = 2s + \#\{\text{fully anchored non-spanning pieces}\},
\]
so with six pieces: $s\in\{2,3,4\}$, exactly $s-2$ pieces float, and
$E\ge s-2$.
\end{lemma}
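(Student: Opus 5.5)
The plan is to count the eight corners of the cube. Each corner lies in exactly one piece. For a box $P=\prod_d[a_d,b_d]$, the corners of the cube that $P$ contains are exactly the points $(c_x,c_y,c_z)$ with $c_d\in\{0,n\}\cap\{a_d,b_d\}$ for every $d$. So the number of cube corners in $P$ is the product over the three directions of $|\{0,n\}\cap\{a_d,b_d\}|$. This factor is $2$ if $P$ is $d$-spanning, $1$ if $P$ is anchored in direction $d$ but not spanning, and $0$ if $P$ floats in direction $d$. All three statements of the census then follow from this product formula, once we know how spanning pieces behave in their transverse directions.

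\textbf{Step 1: spanning pieces are anchored transversally.} Let $P$ be $d$-spanning and suppose it floats in some transverse direction $e\ne d$. Then $(P,e)$ is one of the pairs counted in $E$. Its shadow perpendicular to $e$ is the product of the extent of $P$ in direction $d$, which is $n$, and its extent in the third direction, which is at least $1$. Hence $E\ge \mathrm{sh}_e(P)\ge n$, contradicting $E\le n-1$. So $P$ is anchored in both transverse directions. By Lemma~\ref{lem:nodouble}, $P$ spans no transverse direction, so each transverse factor equals $1$. The product formula then gives exactly $2$ corners in $P$.

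\textbf{Step 2: the other pieces.} A non-spanning piece that is anchored in all three directions has every factor equal to $1$, since it spans none. It therefore contains exactly one corner. A piece floating in some direction has a factor $0$ and contains no corner. By Step 1 no spanning piece floats, so the pieces split into three disjoint classes: $s$ spanning pieces, $a$ fully anchored non-spanning pieces, and $f$ floating pieces. Summing corners over all pieces gives $8=2s+a$.

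\textbf{Step 3: consequences for six pieces.} From $s+a+f=6$ and $a=8-2s$ we get $f=s-2$. The conditions $f\ge 0$ and $a\ge 0$ give $2\le s\le 4$. Every floating piece floats in at least one direction, and each shadow has area at least $1$. Since $E$ sums shadows over pairs $(P,d)$, it is at least the number of floating pieces, so $E\ge s-2$.

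I expect no step to be a real obstacle. The only point needing care is Step 1: it is the hypothesis $E\le n-1$, through the factor $n$ in a spanning piece's transverse shadow, that excludes a spanning piece which floats sideways. Without it, a spanning piece could contribute zero corners and the census would fail.
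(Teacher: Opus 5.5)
Your proposal is correct and follows essentially the same argument as the paper: the corner count as a product over directions of the number of faces touched, the transverse shadow of area $\ge n$ that forces spanning pieces to be anchored, Lemma~\ref{lem:nodouble} giving exactly two corners per spanning piece, and the bookkeeping $8=2s+a$, $f=s-2$, $E\ge f$. Your one addition is to state explicitly that the spanning, fully anchored non-spanning and floating pieces form a partition of all six pieces, which the paper uses implicitly when it counts floating pieces as $(6-s)-a$.
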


\begin{proof}
If a $d$-spanning piece floated in a transverse direction $e$, its
shadow perpendicular to $e$ would contain an $n\times(\text{side})$
band of area $\ge n$, giving $E\ge n$. A piece contains a corner of
the cube for each choice, in the three directions, of a face it
touches; the count of corners in a piece is the product over
directions of the number of faces touched. A spanning anchored piece
touches both faces in its spanning direction and (at least) one in
each transverse direction; it cannot touch both faces in a transverse
direction by Lemma~\ref{lem:nodouble}, so it contains exactly $2$
corners. The eight corners of the cube are distributed among the six
pieces, giving $8=2s+a$ with $a=\#$ fully anchored non-spanning
pieces; $a=8-2s\ge0$ forces $s\le4$, and the number of floating
pieces is $(6-s)-a=s-2\ge 0$, forcing $s\ge2$. Each floating piece
has shadow $\ge1$, so $E\ge s-2$.
\end{proof}

\begin{lemma}[4-tiling lemma]\label{lem:4tiling}
If four rectangles, each with both sides $<n$, tile the square
$[0,n]^2$ and each contains a distinct corner of the square, then the
tiling has a complete straight cut.
\end{lemma}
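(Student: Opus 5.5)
Write the four rectangles as $A,B,C,D$, containing the corners $(0,0),(n,0),(n,n),(0,n)$ respectively, with $A=[0,a_1]\times[0,a_2]$, $B=[n-b_1,n]\times[0,b_2]$, $C=[n-c_1,n]\times[n-c_2,n]$, $D=[0,d_1]\times[n-d_2,n]$. All sides are $<n$, so each rectangle occupies exactly one corner, and since there are only four tiles every tile is one of these four. The plan is a short case analysis on how the corner rectangles meet along the boundary of the square, and then to exhibit a full straight cut.

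First, the boundary. Along the bottom edge $[0,n]\times\{0\}$ the only tiles touching it are those containing bottom cells. A tile touching the bottom edge but containing no bottom corner would be a fifth tile, so only $A$ and $B$ touch it. Hence $a_1+b_1=n$. In the same way $d_1+c_1=n$ (top edge), $a_2+d_2=n$ (left edge) and $b_2+c_2=n$ (right edge).

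Next, the interior. Suppose $a_1\neq d_1$, say $a_1<d_1$ (the other case is symmetric). Then $A$'s top edge is covered from above by $D$ and $A$'s right edge from the right by $B$. Since all four tiles are corner tiles, the point $(a_1,a_2)$ is the top-right corner of $A$; consider a cell just above-right of it. The cell just above $A$ near $x=a_1^-$ lies in $D$, because $d_1>a_1$. So $D$ extends down to $y=a_2$, which means $n-d_2=a_2$. This is consistent with the left-edge equation, and $D$ sits directly on $A$ and overhangs to the right over $x\in[a_1,d_1]$. Below that overhang, in the strip $x\in(a_1,d_1)$ just under $y=a_2$, the cell lies to the right of $A$ and must belong to $B$ (the only tile available besides $C$, and $C$ is above). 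So $B$ reaches height $b_2\ge a_2$. Now $C=[d_1,n]\times[n-c_2,n]$ and $B$ occupies $[a_1,n]\times[0,b_2]$. Since $D$ covers $[0,d_1]\times[a_2,n]$ and $B$ is below $D$ on $(a_1,d_1)$, we get $b_2\le a_2$. Thus $b_2=a_2$, and the horizontal line $y=a_2$ is a full cut.

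Symmetrically, if $a_1=d_1$ the vertical line $x=a_1$ is a full cut: $A\cup D$ fills $[0,a_1]\times[0,n]$, and this fill is forced by the left-edge equation $a_2+d_2=n$.

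The main obstacle is making the local argument rigorous, namely the claim that the cells adjacent to the overhang must belong to the specific tile named. To handle it I would argue via areas and disjointness instead. Assume $a_1<d_1$. Disjointness of $B$ and $D$ forces $b_2\le n-d_2=a_2$. Then the cells in $[a_1,d_1]\times[b_2,a_2]$ lie in none of $A$, $B$, $D$ (since $a_1<x$, $y\ge b_2$, and $y<n-d_2$), and they cannot lie in $C$ because $x<d_1=n-c_1$. So that region is empty, which forces $b_2=a_2$ and gives the cut $y=a_2$.
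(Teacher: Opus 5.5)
Your argument is correct. It starts exactly as the paper's does, with the same labelling and the same four boundary equations, but it closes differently.

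The paper avoids any case split. Substituting the boundary equations into the area count $\sum \mathrm{area}=n^2$ yields, in your notation, $(a_1-d_1)(a_2-b_2)=0$, and each vanishing factor is read off directly as a vertical or horizontal cut.

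You instead assume $a_1<d_1$; the case $a_1>d_1$ follows by reflecting in $y=n/2$, which swaps $A\leftrightarrow D$ and $B\leftrightarrow C$. Disjointness of $B$ and $D$ over their common $x$-range $(a_1,d_1)$ gives $b_2\le a_2$. Then $b_2=a_2$ is forced, because the open box $(a_1,d_1)\times(b_2,a_2)$ meets none of the four tiles.

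The algebraic route is shorter and symmetric; the area identity silently packages disjointness and coverage together. Yours is more elementary and shows what the product $(a_1-d_1)(a_2-b_2)$ measures: the signed area of the central gap (or overlap) of a would-be pinwheel. That is why a non-guillotine tiling needs a fifth piece.

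Some tidying:
\begin{itemize}
\item Only your final paragraph is needed. It uses coverage and disjointness, not areas, despite how you describe it.
\item In the earlier ``local'' paragraph, $C$ is excluded from the strip because $C\subset\{x\ge n-c_1=d_1\}$, not because it is ``above''.
\item For the boundary equations, the right reason $C$ and $D$ miss the bottom edge is that their heights are $<n$. Covering gives $a_1+b_1\ge n$ and disjointness gives $a_1+b_1\le n$.
\end{itemize}
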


\begin{proof}
Label the rectangles $A,B,C,D$ counterclockwise from the bottom-left
corner, with widths $b$ and heights $c$. Covering the bottom and top
sides forces $b_A+b_B=b_D+b_C=n$; covering the left and right sides
forces $c_A+c_D=c_B+c_C=n$. Expanding
$b_Ac_A+b_Bc_B+b_Cc_C+b_Dc_D=n^2$ under these constraints yields
\[
(b_A-b_D)(c_A-c_B)=0,
\]
i.e.\ a full vertical cut ($b_A=b_D$) or a full horizontal cut
($c_A=c_B$).
\end{proof}

\begin{proposition}[configuration analysis]\label{prop:config}
Assume $E\le n-1$. Write the configuration as the multiset
$(s_x,s_y,s_z)$ of spanning pieces per direction. Then (S) holds in
every case:
\begin{itemize}
\item[$s=2$:] $U\le n(n-1)$.
\item[$s=3$, $(1,1,1)$:] $U\le n^2-n+1\le n^2-n+E$ (since $E\ge1$).
\item[$s=3$, $(3,0,0)$:] impossible.
\item[$s=4$, $(3,1,0)$ and $(2,1,1)$:] impossible.
\item[$s=4$, $(4,0,0)$:] forces $E\ge n$, excluded.
\item[$s=4$, $(2,2,0)$:] impossible, or $U\le n(n-1)$.
\item[$s=3$, $(2,1,0)$:] $n^2-U+E\ge n+1$.
\end{itemize}
\end{proposition}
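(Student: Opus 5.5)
We argue configuration by configuration, using throughout three facts. By Lemma~\ref{lem:nodouble} no piece spans two directions. By Lemma~\ref{lem:census}, with $E\le n-1$ every spanning piece is anchored in both transverse directions, $s\in\{2,3,4\}$, and exactly $s-2$ pieces float. Finally, a $d$-spanning piece $P$ with transverse extents $a,b<n$ has cross-section $ab\le (n-1)\max(a,b)$.

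The basic tool is a \emph{projection argument}. Project all $d$-spanning pieces onto the face perpendicular to $d$. Their cross-sections are disjoint rectangles, and they cannot cover the whole square, since otherwise only $d$-spanning pieces would exist and a fault plane parallel to $d$ would appear (each cross-section has sides $<n$). Pieces spanning a different direction $e$ also project onto full-length bands of width $\ge 1$ that the $d$-spanners avoid. This lets us bound the uncovered area.

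\emph{Case $s=2$.} Two spanning pieces, in directions $d$ and $e$ (possibly equal). Each has cross-section at most $n(n-1)$. If the two directions differ, the pieces are disjoint, so $nU = v_1+v_2 \le n^3 - V_0$; the remaining four non-spanning pieces have volume $V_0 \ge n$, which should give $U \le n^2-n$. If both pieces are $d$-spanning, their cross-sections tile part of the $n\times n$ face, and the complement must be nonempty and carry non-$d$-spanning pieces; each such column costs at least one unit of cross-section per unit of depth. I would show the uncovered region has area at least $n$ by noting that the non-spanning pieces must reach the faces $x_d=0$ and $x_d=n$, and anchoring forces a full band.

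\emph{Case $s=3$, $(1,1,1)$.} Three pieces spanning three different directions, pairwise disjoint. Write their cross-sections as $a_ib_i$. The $x$-spanner blocks a strip of the $y$-spanner's columns, and so on. A direct inclusion computation, $U = \sum a_ib_i$ subject to the non-overlap constraints, gives $U\le n^2-n+1$, attained when the three pieces use $n-1$ on one side. This is a short optimization over corner placements, using the fact that each piece contains two corners.

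\emph{Cases $(3,0,0)$, $(3,1,0)$, $(2,1,1)$, $(4,0,0)$.} These I would handle by corner counting and the 4-tiling lemma. For $(4,0,0)$, the four $x$-spanners each hold two corners, so their cross-sections are four rectangles at the four corners of the $yz$-square with all sides $<n$. By Lemma~\ref{lem:4tiling}, either they tile, producing a fault plane, or the remaining uncovered region is a central hole. In the latter case the two floating pieces, floating in $y$ and $z$, must fill a central tube of length $n$. One of them then floats in a direction in which its shadow contains a band of length $n$, giving $E\ge n$. For $(3,0,0)$, three $x$-spanners occupy six of the eight corners, and the remaining corners force the single fully anchored non-spanning piece to touch both $x$-faces, i.e.\ to span $x$ as well, a contradiction. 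Cases $(3,1,0)$ and $(2,1,1)$ are handled similarly: the cross-section of a transverse spanner must avoid corners already taken, and this contradicts anchoring.

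\emph{Cases $(2,2,0)$ and $(2,1,0)$.} These carry the real content and are where I expect the main obstacle. For $(2,2,0)$, two $x$-spanners and two $y$-spanners together take all eight corners and are $z$-anchored. Projecting onto the $xy$-plane gives four rectangles, one at each corner. Lemma~\ref{lem:4tiling} applied to the appropriate layer either produces a fault plane, which is impossible, or bounds the uncovered area by at least $n$, giving $U\le n(n-1)$.

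For $(2,1,0)$, $E\ge1$ from the single floating piece. We need to show that the uncovered area $n^2-U$ plus $E$ is at least $n+1$. I would parametrize the two $x$-spanners and the $y$-spanner by their corner placements, then show that the floating piece's shadow together with the $z$-column deficit accounts for a full row of length $n$ plus one extra cell. The extra cell should come from the floating piece's shadow. This case analysis is the step I expect to be hardest, and I would start by enumerating the corner assignments allowed by Lemma~\ref{lem:census}.
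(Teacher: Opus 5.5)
Your proposal has genuine gaps, and several cases you treat as routine contain outright errors.

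\textbf{$(3,0,0)$.} You miscount the census. Lemma~\ref{lem:census} gives $8-2s=2$ fully anchored non-spanning pieces, not one. So the two cube corners over the fourth corner $D$ of the $yz$-square are held by two different pieces ($P_0$ at $x=0$, $P_1$ at $x=n$), and nothing is forced to span $x$. The contradiction has to come from geometry. Over the complement $C$ of the three shadows only $P_0$, $P_1$ and the floating piece live, and the column structure (bottom, top, middle gap) forces $C$ to be a rectangle. The $yz$-square is then tiled by four corner rectangles with sides $<n$, and Lemma~\ref{lem:4tiling} gives a complete cut, which lifts to a fault plane.

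\textbf{$s=2$.} With different directions, $nU=n^3-V_0$ gives $U\le n^2-n$ only if $V_0\ge n^2$; your $V_0\ge n$ gives just $U\le n^2-1$. The missing observation is that the $y$-spanner's $yz$-projection is a full band in $y$ that must avoid the $x$-spanner's cross-section. Hence the $z$-extents satisfy $c_1+c_2\le n$, and $U\le(n-1)(c_1+c_2)\le n(n-1)$. With both spanners in the same direction, two disjoint rectangles are separated along one axis. Their extents along that axis sum to at most $n$ and their other sides are at most $n-1$, which gives the same bound.

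\textbf{$(4,0,0)$.} Neither non-spanning piece has a shadow containing a band of length $n$. Neither can float in $x$, since a column would then need three non-spanning pieces. So they are stacked as $[0,h]\times C$ and $[h,n]\times C$ and float in the same transverse coordinate. Only the sum $h\ell+(n-h)\ell=n\ell$ of their two shadows gives $E\ge n$.

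\textbf{$(2,2,0)$.} Projecting onto the $xy$-plane does not produce four corner rectangles. The $x$-spanners project to full-width bands and the $y$-spanners to full-height bands, so Lemma~\ref{lem:4tiling} has nothing to act on; in any case it yields a cut, not an area bound. What works is the following. The census places the two $x$-edges at $z=0$ and the two $y$-edges at $z=n$. Cross-disjointness gives $\gamma=\max(c_1,c_2)+\max(c_3,c_4)\le n$. If $\gamma\le n-1$, then $U\le n\gamma\le n(n-1)$. The case $\gamma=n$ is excluded by examining the two non-spanning pieces, the only ones able to cross $z=\max(c_1,c_2)$: either they become fully anchored, contradicting $8-2s=0$, or one of them spans $x$.

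\textbf{$(2,1,0)$.} You give only a plan, and its heuristic (the deficit $n^2-U$ supplies a full row and $E$ the extra cell) is false. Normalize with $x$-spanner shadows $[0,b_1]\times[0,c]$ and $[n-b_2,n]\times[0,c_2]$, $y$-spanner shadow $[0,a_3]\times[n-c_3,n]$, and $g=n-b_1-b_2$. In the tight subcase $c+c_3=n$, $g=0$, the geometry is forced to an explicit family with $n^2-U=b_2(c-c_2)+(n-c)(n-a_3)$ and $E=a_3b_2$. For instance, $n=4$, $b_1=3$, $b_2=1$, $c=3$, $c_2=2$, $c_3=1$, $a_3=3$ is a fault-free partition with $E=3\le n-1$ but $n^2-U=2$. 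Here almost all of the required $n$ comes from $E$. So you must derive the forced family cell by cell and compute $E$ exactly, rather than bound $n^2-U$ and $E$ separately.

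\textbf{Remaining cases.} $(1,1,1)$, $(3,1,0)$ and $(2,1,1)$ are likewise only asserted. The first needs the explicit bilinear optimization. The other two need the observation that with $s=4$ all eight corners lie on four full cube edges.
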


\begin{proof}
Throughout, spanning pieces are anchored transversally
(Lemma~\ref{lem:census}) and have transverse sides $\le n-1$
(Lemma~\ref{lem:nodouble}); their shadows in a common transverse
square are pairwise disjoint corner rectangles.

\emph{$s=2$.} If both span $x$, their $yz$-shadows are disjoint
corner rectangles with all sides $\le n-1$; disjointness forces the
sum of their widths $\le n$ along shared rows and their heights are
$\le n-1$, so $U\le n(n-1)$. If they span different directions, say
$x$ and $y$ with cross-sections $b_1\times c_1$ and $a_2\times c_2$:
the $y$-spanning piece's shadow in the $yz$-plane is a band crossing
all of $y$, which must avoid the columns of the $x$-spanning piece,
so the $z$-ranges are disjoint, $c_1+c_2\le n$, and
$U=b_1c_1+a_2c_2\le(n-1)(c_1+c_2)\le n(n-1)$.

\emph{$(1,1,1)$.} One spanning piece per axis, cross-sections
$b_1\times c_1$, $a_2\times c_2$, $a_3\times b_3$. Pairwise
disjointness of the corresponding column families gives
$c_1+c_2\le n$, $b_1+b_3\le n$, $a_2+a_3\le n$. Setting $x=b_1$,
$y=c_1$, $z=a_2$ and maximizing the bilinear form,
\[
U \le xy + z(n-y) + (n-z)(n-x) = n^2 - x(n-y-z) - zy.
\]
If $y+z\le n$: with $x\ge1$,
$U\le n^2-n+y+z-yz=n^2-n+1-(y-1)(z-1)\le n^2-n+1$. If $y+z>n$: with
$x\le n-1$, substituting $y=n-1-p$, $z=n-1-q$ ($p+q<n-2$) gives
$U\le n^2-(n-1)-pq\le n^2-n+1$. Since $s=3$ forces $E\ge1$
(Lemma~\ref{lem:census}), (S) holds.

\emph{$(3,0,0)$.} Three $x$-spanning pieces occupy three corners of
the $yz$-square; the remaining pieces are two anchored pieces
$P_0,P_1$ (whose cube corners sit over the fourth square corner $D$)
and one floating piece $P_f$. Over the complement $C$ of the three
shadows, only $P_0,P_1,P_f$ live. If $P_f$ is interior in $x$, every
column of $C$ needs a bottom from $P_0$, a top from $P_1$, and its
middle gap covered by $P_f$; hence the three shadows all equal $C$
and $C$ is a rectangle. If $P_f$ is anchored in $x$ (say at $x=0$),
the top of every column of $C$ is $P_1$, so $\mathrm{sh}(P_1)=C$ is a
rectangle and the bottom splits into the rectangles
$\mathrm{sh}(P_0)$, $\mathrm{sh}(P_f)$. In both cases the $yz$-square
is tiled by four corner rectangles with sides $<n$, so
Lemma~\ref{lem:4tiling} gives a complete 2D cut; no 3D piece crosses
the corresponding plane (all pieces are prisms over tiles of the
square or stacks inside $C$), so it is a fault plane: contradiction.

\emph{$(3,1,0)$, $(2,1,1)$.} With $s=4$ the census places all $8$
corners on the $4$ spanning pieces, i.e.\ on $4$ full edges of the
cube. In $(3,1,0)$, the two corners not accounted for by the three
$x$-edges would have to lie over the same corner of the $yz$-square,
differing only in $x$; but the two corners of a $y$-edge differ in
$y$: impossible. In $(2,1,1)$, a short case check on the placement of
the two $x$-edges (sharing the $y$-side, sharing the $z$-side, or
diagonal) shows in each case that the $y$-edge or the $z$-edge would
need a corner on an already covered face: impossible.

\emph{$(4,0,0)$.} The complement $C$ of the four shadows is nonempty
(four spanning pieces cannot fill the cube, as six pieces exist), has
no corner of the square available, and each of its columns contains
exactly the two non-spanning pieces, stacked as $[0,h]$, $[h,n]$:
both are anchored in $x$ with shadow $C$, a rectangle interior in
some transverse coordinate. Since exactly $s-2=2$ pieces must float,
both float in that coordinate, and their floating shadows sum to
$h\cdot\ell+(n-h)\cdot\ell=n\ell\ge n$, so $E\ge n$: excluded.

\emph{$(2,2,0)$.} The census forces the two $x$-edges on one side
(say $z=0$) and the two $y$-edges on the other ($z=n$). Let
$\gamma=\max(c_1,c_2)+\max(c_3,c_4)$, where $c_i$ are the $z$-extents
of the shadows; cross disjointness gives $\gamma\le n$. If
$\gamma\le n-1$ then
$U\le n\max(c_1,c_2)+n\max(c_3,c_4)\le n(n-1)$. If $\gamma=n$, the
plane $z=c$ (with $c=\max(c_1,c_2)$) can be crossed only by the two
non-spanning pieces $Q_1,Q_2$; the lower and upper gaps
$\mathrm{Gap}_B$, $\mathrm{Gap}_T$ are nonempty (else $z=c$ is a
fault plane), and both $Q_i$ shadows contain them. If
$b_1+b_2<n$ (or symmetrically $a_3+a_4<n$), each $Q_i$ becomes
anchored in all three directions, hence contains a cube corner,
contradicting the census ($8-2s=0$ anchored non-spanning pieces
allowed). If $b_1+b_2=n$ and $a_3+a_4=n$, one $Q$ would need
$x$-extent $[0,n]$, making it spanning: contradiction.

\emph{$(2,1,0)$.} Normalize by the census: $x$-edges on the side
$z=0$ with shadows $R_1=[0,b_1]\times[0,c_1]$,
$R_2=[n-b_2,n]\times[0,c_2]$ in the $yz$-square (WLOG $c_1\ge c_2$,
$c:=c_1$), the $y$-edge at $(x^*=0,z^*=n)$ with shadow
$S_3=[0,a_3]\times[n-c_3,n]$ in the $xz$-square, and two anchored
pieces $P_0$ (corner $(n,0,n)$), $P_1$ (corner $(n,n,n)$), plus one
floating piece. Disjointness gives $c+c_3\le n$. Let
$g=n-b_1-b_2\ge0$. A direct computation gives
\[
n^2-U = cg + b_2(c-c_2) + (n-c)(n-a_3), \qquad a_3\le n-1.
\]
If $c+c_3\le n-1$, then $U\le nc+(n-1)c_3\le n(n-1)$. If $c+c_3=n$
and $g\ge1$, then $n^2-U\ge c+(n-c)=n$. If $c+c_3=n$ and $g=0$, the
geometry is forced cell by cell: $c_2<c$ (else $z=c$ is a fault
plane); the lower gap $\mathrm{Gap}_B=[b_1,n)\times[c_2,c)$ must have
its $x=0$ cells covered by a piece anchored at $x=0$, which can only
be the floating piece $P_f$ (so $P_f$ floats in $z$); the remaining
adjacencies force the explicit family
\[
\begin{aligned}
&S_1=n\times[0,b_1]\times[0,c], \quad
S_2=n\times[b_1,n]\times[0,c_2], \quad
S_3=[0,a_3]\times n\times[c,n],\\
&P_f=[0,a_3]\times[b_1,n]\times[c_2,c], \quad
P_0=[a_3,n]\times[0,b_1]\times[c,n], \quad
P_1=[a_3,n]\times[b_1,n]\times[c_2,n],
\end{aligned}
\]
which is a valid fault-free partition with
$E=\mathrm{sh}_z(P_f)=a_3b_2$. In this family
\[
n^2-U+E = b_2(c-c_2) + (n-c)(n-a_3) + a_3b_2,
\]
and with $b_2\ge1$, $c-c_2\ge1$: if $c=n-1$ the last two terms sum to
$n$ exactly; if $c\le n-2$ the minimum over $a_3$ is at $a_3=n-1$ and
equals $(n-c)+(n-1)\ge n+1$. In all cases $n^2-U+E\ge n$, proving
(S).
\end{proof}

\begin{proof}[Proof of Theorem~\ref{thm:ff}]
Lower bound: by Lemma~\ref{lem:float} and inequality (S), proved in
all configurations by Proposition~\ref{prop:config}, every fault-free
partition into six boxes has $I=2n^2+(n^2-U)+E\ge 2n^2+n$. Upper
bound: the explicit family
\[
\begin{aligned}
&S_1=[0,n]\times[1,n]\times[0,n-1], \quad
S_2=[1,n]\times[0,n]\times[n-1,n],\\
&Q_1=[0,1]^3,\quad
Q_2=[1,n]\times[0,1]\times[0,n-1],\quad
Q_3=[0,1]\times[0,1]\times[1,n],\\
&Q_4=[0,1]\times[1,n]\times[n-1,n]
\end{aligned}
\]
tiles the cube for every $n\ge3$, has no fault plane (the piece $Q_3$
crosses $z=n-1$; $S_1$ kills all $x$- and interior $z$-faults, $S_2$
all $y$-faults), and has $E=0$, $U=n(n-1)$, hence
$I=3n^2-U=2n^2+n$.
\end{proof}

The lemma was verified exhaustively for $n=3$ ($924$ fault-free
partitions) and $n=4$ ($6894$): the identities of
Lemma~\ref{lem:float} hold exactly and (S) has no exception.

\section{Lower bound, fault-plane case}\label{sec:fault}

\begin{lemma}[2D minimum wall]\label{lem:2d}
For every $m\ge 2$ and every $n\ge m-2$, every partition of the square
$[0,n]^2$ into $m$ axis-aligned rectangles with integer corners has
total internal wall length $W\ge n+m-2$.
\end{lemma}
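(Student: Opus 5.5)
The plan is to prove the bound by a \emph{maximal-segment} decomposition of the wall, using integrality to turn segment counts into length. Two facts drive it: every rectangle adds at least one unit of wall, and something must carry a full length $n$ across the square.

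\emph{Step 1: decomposition.} For a partition of $[0,n]^2$ into $m\ge 2$ rectangles, the internal wall is a union of \emph{maximal segments}, i.e.\ maximal straight horizontal or vertical pieces of wall. Let $S$ be the number of maximal segments and $X$ the number of cross junctions, where two maximal segments cross in a ``$+$''. Every other junction is a T-junction or lies on the boundary. I will verify the identity
\[
m = 1 + S + X
\]
by Euler's formula on the planar graph formed by the wall together with the boundary, or by induction, adding segments one at a time. A segment creating no crossing splits exactly one face; each crossing it creates splits one more. By integrality of corners every maximal segment has length $\ell\ge 1$, and a segment through a cross point has $\ell\ge 2$. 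Hence
\[
W=\sum \ell = S + \sum(\ell-1) = m-1-X+\sum(\ell-1),
\]
so the lemma reduces to the inequality
\[
\sum_{\text{segments}}(\ell-1)\;\ge\; n-1+X. \tag{$\dagger$}
\]

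\emph{Step 2: proving $(\dagger)$ by a projection argument.} Consider the rectangle $R_0$ containing the corner $(0,0)$. If $R_0$ has width $n$ or height $n$, its inner side is a full cut of length $n$, which contributes $n-1$ by itself. Otherwise I follow a chain of wall segments from the left side to the right side, or from bottom to top, as in the row counting of Lemma~\ref{lem:columns}. In each unit row $y\in[t,t+1]$ the number of pieces met is $r_t$, and $W_v=\sum_t(r_t-1)$; likewise $W_h=\sum_{\text{columns}}(c_j-1)$. If some row has $r_t=1$, a piece spans horizontally and its top or bottom edge yields horizontal wall of total length $n$. If no row has $r_t=1$, then $W_v\ge n$. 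In both cases some family of segments has total length at least $n$. Subtracting one per segment, this gives $\sum(\ell-1)\ge n-(\#\text{segments in the family})$.

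\emph{Step 3: the crossings.} I would then account for the $-1$ terms and the $X$ crossings by charging. Each cross point lies in the interior of two segments, so it forces each of them to have length at least $2$. I would charge each crossing to one extra unit of the segment passing through it, distinct from the unit already used in Step 2. To avoid double counting, I would charge crossings only to segments outside the spanning family of Step 2, or else exploit that a crossing segment in the family has length at least $2$ beyond its crossing.

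The main obstacle is Step 2. For non-guillotine tilings, such as pinwheels, no single segment has length $n$, and the ``$-1$ per segment'' loss in the covering family must be absorbed. The subtlety is that a family of segments covering a length $n$ can lose up to one unit per segment. If the naive charging fails, I would fall back on induction on $m$. I would remove a rectangle adjacent to a boundary side that can be merged with its neighbour, which exists by the standard ``no-pinwheel-at-the-boundary'' argument. Merging reduces $m$ by one and $W$ by at least one, and the base case $m=2$ gives $W=n$. This also shows where the hypothesis $n\ge m-2$ enters: it keeps the base configurations realizable and ensures that the merged tiling still satisfies $n\ge (m-1)-2$ for the induction.
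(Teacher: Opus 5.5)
Your Step 1 is sound: $m=1+S+X$ does follow from Euler's formula, and the reduction of the lemma to $(\dagger)$ is exact. There is a genuine gap, though, in the fault-free (non-guillotine) case. You flag it yourself but do not close it, and your fallback does not work there.

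\emph{Why Steps 2--3 fall short.} In that case Step 2 extracts length $n$ from one direction only, say $W_v\ge n$. This gives $\sum_{\mathrm{ver}}(\ell-1)\ge n-S_v$. Charging crossings to horizontal segments then gives $\sum_{\mathrm{hor}}(\ell-1)\ge X$. Altogether you reach $n-1+X-(S_v-1)$, short by $S_v-1$. In a pinwheel $X=0$ and $S_v=2$, so there is nothing to charge and the deficit is real.

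\emph{Why the induction fallback fails.} It needs, in every tiling, two tiles whose union is a rectangle. The five-piece pinwheel is a counterexample: all four arms touch the boundary, yet no two tiles have a rectangular union. So the ``no-pinwheel-at-the-boundary'' fact you invoke does not exist. A symptom is your account of the hypothesis $n\ge m-2$: a merging induction would never use it, whereas it is needed exactly in the fault-free case.

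\emph{The missing idea.} When there is no fault line, use both directions at once. If no grid line lies entirely on tile boundaries, no tile has full width or full height, since one of its edges would be interior and would give such a line. Hence every row and every column of unit cells meets at least two tiles, so $W_v\ge n$ and $W_h\ge n$. This gives $W\ge 2n\ge n+m-2$, which is precisely where $n\ge m-2$ enters. In your notation, $\sum(\ell-1)=W-S\ge 2n-(m-1-X)\ge n-1+X$.

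\emph{The fault-line case.} Here your framework does go through once made precise. Take the fault line $L$ horizontal. Then $L$ is a single maximal segment, so $\sum_{\mathrm{hor}}(\ell-1)\ge n-1$. Each crossing is an interior lattice point of exactly one vertical maximal segment, so $\sum_{\mathrm{ver}}(\ell-1)\ge X$. The paper handles this case more simply. The fault line costs $n$. Each of the two sub-rectangles, tiled by $m_i$ pieces, has wall at least $m_i-1$, because its adjacency graph is connected and adjacent integer tiles share length at least $1$.
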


\begin{proof}
First, in any tiling of a connected region by $j$ rectangles the
adjacency graph is connected (removing the finitely many corner points
does not disconnect the region, so two complementary unions of pieces
must share a boundary of positive length), and two adjacent integer
rectangles share wall length at least $1$; hence any such tiling has
internal wall length at least $j-1$.

If some grid line is fully covered by piece boundaries (a fault line),
it contributes $n$ and splits the square into two rectangles carrying
$m_1+m_2=m$ pieces; by the previous paragraph the two sides contribute
at least $(m_1-1)+(m_2-1)$, so $W\ge n+m-2$.

If there is no fault line, then no piece has any full dimension: a
piece of full width has a horizontal edge interior to the square, and
that entire grid line lies on piece boundaries, a fault line. Hence
every horizontal row of unit cells meets at least two pieces and
contributes at least $1$ to the vertical wall, and likewise every
column; therefore $W\ge 2n\ge n+m-2$.
\end{proof}

\begin{lemma}[Cascade]\label{lem:cascade}
Let $k\ge 4$ and $n\ge (k-2)(k-1)/2$. Every partition of the cube into
$k$ boxes that has a fault plane satisfies $I\ge n^2+n+(k-3)$.
\end{lemma}

\begin{proof}
A fault plane contributes $n^2$ to $I$ and splits the cube into boxes
$A$, $B$ of thicknesses $a+b=n$ carrying $k_A+k_B=k$ pieces. We use
twice the following: a box $t\times n\times n$ containing at least two
pieces has internal interface at least $tn$; indeed either it has an
internal fault plane, whose cheapest cross-section has area $tn$, or
it is fault-free inside, and the column bound of
Lemma~\ref{lem:columns}, applied inside the box (the spanning families
in the three directions are disjoint by the argument of
Lemma~\ref{lem:nodouble}, and their cross-sections are bounded by
volume$/t=n^2$), gives at least $n^2+2tn-n^2=2tn$.

If $k_A,k_B\ge 2$, then $I\ge n^2+an+bn=2n^2$, far above the target.
Hence one side is a single piece and the other side $B$, of thickness
$b$, carries $k-1$ pieces. If $b\ge 2$, then $I_B\ge bn\ge 2n\ge
n+(k-3)$, since the threshold gives $n\ge k-3$. If $b=1$, every piece
of $B$ has thickness $1$ and the partition of $B$ is a partition of
the $n\times n$ square into $m=k-1$ rectangles, so
Lemma~\ref{lem:2d} gives $I_B\ge n+(k-3)$, using $n\ge m-2=k-3$. In
all cases $I\ge n^2+n+(k-3)$.
\end{proof}

Combining Lemmas~\ref{lem:columns} and~\ref{lem:cascade} with
Lemma~\ref{lem:construction} proves the value of $T(n,6)$ for
$n\ge 10$ in Theorem~\ref{thm:main}. Equality analysis in
Lemma~\ref{lem:cascade} forces the block, the slab and the stick
column, which yields the structure statement; the count of optimal
partitions is then the number of partitions of $n$ into four distinct
parts.

\section{The reduction theorem}\label{sec:reduction}

The optimal partitions just described have a common shape: one solid
block plus a slab of thickness $1$ solved as a two-dimensional
problem \emph{with distinctness}. This is not an accident of $k=6$:
in the whole feasible range the 3D problem collapses to 2D. Recall
$W^*(n,m)$ from Theorem~\ref{thm:red}: the minimum internal wall of a
tiling of the $n\times n$ square by $m$ rectangles with pairwise
distinct dimension pairs. The engine of the proof is the following
unconditional lemma.

\begin{lemma}[Slab lemma]\label{lem:slab}
Let a slab $b\times n\times n$ with thickness $b\ge2$ and $n\ge3$ be
partitioned into $j$ boxes, $2\le j\le n+2$. Then
\[
I \;\ge\; 2n+j-3.
\]
More precisely: if the slab has a fault plane perpendicular to its
thickness, $I\ge n^2$; otherwise $I\ge b(n-1)+j-1$.
\end{lemma}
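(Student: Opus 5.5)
We prove the two refined bounds separately and then read off the headline inequality. Coordinates: the slab is $[0,b]\times[0,n]^2$, and the thickness direction is $x$. A fault plane perpendicular to the thickness is a plane $x=i$ with $0<i<b$ met by no piece interior. Such a plane is covered entirely by piece faces and contributes its full area $n\times n=n^2$ to $I$. Since $n\ge3$ and $j\le n+2$, we have $n^2\ge 2n+j-3$, because $n^2-2n\ge n-1\ge j-3$. So this case is immediate, and all the work is in the case with no such plane.

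Assume now there is no $x$-fault plane. We count by unit columns as in Lemma~\ref{lem:columns}. Each of the $n^2$ columns in direction $x$ has length $b$, and it meets just one piece exactly when it lies under the shadow of an $x$-spanning piece (extent $[0,b]$ in $x$). So $I_x=\sum(m-1)$ over these columns. The interfaces perpendicular to $y$ and $z$ are handled by projecting the slab: each plane $x=c+\tfrac12$ with $0\le c<b$ cuts the slab in a tiling of the $n\times n$ square. The contribution $I_y+I_z$ is the sum over the $b$ layers of the internal wall length $W_c$ of the layer tiling. The target is $b(n-1)+j-1$, which suggests getting roughly $n-1$ per layer from $I_y+I_z$ and the extra $j-1$ from $I_x$ together with connectivity.

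The plan for the no-fault case has three steps. First, treat the $x$-spanning pieces: they form a prism over a 2D tiling, and every layer sees them identically. Second, in each layer, apply the connectivity argument from the proof of Lemma~\ref{lem:2d}. A layer tiled by $j_c$ rectangles has wall at least $j_c-1$. A layer meeting at least two pieces also has wall at least $n$, since some grid line separates two pieces across the whole square; more precisely, a rectangle tiling with at least two tiles has a wall of total length at least $n$ separating a piece from its complement, unless the layer has a single piece. Third, use the absence of an $x$-fault to show that no layer is a single piece. If a layer were one piece $P$ filling $[c,c+1]\times[0,n]^2$, then $P$ would have full $y$ and $z$ extent, and its $x$-faces would be fault planes unless $b=1$. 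So every layer contains at least two pieces and contributes at least $n-1$ (in fact the bound $\ge n$, which we can weaken). This gives $I_y+I_z\ge b(n-1)$. For the extra $j-1$: every piece not $x$-spanning has a face perpendicular to $x$ interior to the slab, of area at least $1$. For the remaining pieces, connectivity of the adjacency graph of the $j$ pieces, counted via a spanning tree whose edges are distinct interface squares, supplies the rest. Carefully, we would show that the walls counted in the layers can be charged so that the $j-1$ tree edges are not all absorbed into the per-layer $n-1$. This is the most delicate point, and a cleaner route may be to take per-layer bounds $n-1+(j_c-1)$ minus overlaps.

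The main obstacle is this disjointness in the third step: combining the $b(n-1)$ coming from layers with the $j-1$ coming from connectivity without double counting. What I would try first is to bound each layer wall by $W_c\ge n+j_c-2$, using Lemma~\ref{lem:2d} in the form valid for all $j_c$; this needs $n\ge j_c-2$, which holds since $j_c\le j\le n+2$. Summing gives $\sum_c W_c\ge b(n-1)+\sum_c(j_c-1)+b$. We also have $I_x\ge \#\{\text{non-}x\text{-spanning pieces}\}/2$-type contributions, and $\sum_c (j_c-1)+I_x\ge j-1$, because each piece appears in some layer and pieces spanning several layers are glued by $x$-interfaces. Finally, $b(n-1)+j-1\ge 2n+j-3$ holds since $b\ge2$.
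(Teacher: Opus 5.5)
Your architecture matches the paper's. When a plane perpendicular to the thickness is a fault you use the $n^2$ bound. Otherwise you cut into $b$ unit layers, note that no layer is a single piece, and apply Lemma~\ref{lem:2d} to each layer (legitimate, since $2\le j_c\le j\le n+2$), using $I_y+I_z=\sum_c W_c$. However, the step that has to produce the $j-1$ is not proved.

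Summing $W_c\ge n+j_c-2$ gives $\sum_c W_c\ge b(n-1)+\sum_c(j_c-1)$. There is no extra $+b$, and with it the bound is false: two prisms $2\times(n-1)\times n$ and $2\times 1\times n$ have $\sum_c W_c=2n$. So you need $\sum_c(j_c-1)\ge j-1$, that is, $\sum_c j_c\ge j+b-1$.

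Your justification, ``each piece appears in some layer and pieces spanning several layers are glued by $x$-interfaces'', only yields $\sum_c j_c\ge j$. That leaves you $b-1$ short; for $b=2$ you get $2n+j-4$, one below the target $2n+j-3$. The term $I_x$ cannot make up the difference in general, because it is $0$ whenever every piece spans the thickness. You flagged this yourself as the delicate point, and it remains open in the proposal.

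The missing idea is to use the no-fault hypothesis a second time, as a counting statement. Each of the $b-1$ internal planes $x=i$ is crossed by at least one box, and a box of thickness $t_i$ crosses exactly $t_i-1$ of them, so $\sum_i(t_i-1)\ge b-1$. Each box is counted in exactly $t_i$ layers, so $\sum_c j_c=\sum_i t_i\ge j+b-1$. Hence $\sum_c(j_c-1)\ge j-1$, and $I\ge I_y+I_z\ge b(n-1)+j-1$ without using $I_x$ at all.

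In your spanning-tree language: consecutive layer graphs share a vertex, because some piece crosses the plane between them. So the union of the $b$ layer spanning trees, which has $\sum_c(j_c-1)$ edges, is connected on all $j$ pieces.

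Separately, your aside that a layer with two or more pieces has wall $\ge n$ ``since some grid line separates two pieces across the whole square'' is false for non-guillotine layers such as the pinwheel. You do not need it, since Lemma~\ref{lem:2d} already gives $W_c\ge n+j_c-2$.
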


\begin{proof}
\emph{Case A: some plane $x=t$ interior to the thickness is a
fault.} It contributes an interface of area $n^2$, and
$n^2\ge 2n+j-3$ because $j\le n+2$ and $n\ge3$
($n^2-2n+3=(n-1)^2+2\ge n+2$ iff $n^2-3n+1\ge0$, true for $n\ge3$).

\emph{Case B: no fault perpendicular to the thickness.} Cut the slab
into $b$ unit layers. Each layer $\ell$ induces a tiling
$Q_\ell$ of the $n\times n$ square by the cross-sections of the boxes
meeting it, with $j_\ell$ pieces. No box has a full $n\times n$
cross-section (its faces would be faults perpendicular to the
thickness), so $j_\ell\ge2$; and $j_\ell\le j\le n+2$, so the 2D
bound (Lemma~\ref{lem:2d}, which requires no distinctness) gives
$w(Q_\ell)\ge n+(j_\ell-2)$. Two exact accounting identities: each
interface perpendicular to $y$ or $z$ of area $A$ appears in exactly
$A$ layer-walls in total, so
$I_y+I_z=\sum_\ell w(Q_\ell)$; and each box of thickness $t_i$ counts
in $t_i$ layers, so $\sum_\ell j_\ell=\sum_i t_i$. Finally, without a
fault perpendicular to the thickness each of the $b-1$ internal
planes is crossed by some box, and a box crosses $t_i-1$ planes, so
$\sum_i(t_i-1)\ge b-1$, i.e.\ $\sum_i t_i\ge j+b-1$. Combining,
\[
I \;\ge\; I_y+I_z \;=\;\sum_\ell w(Q_\ell)
\;\ge\; b(n-2)+\sum_\ell j_\ell
\;\ge\; b(n-2)+j+b-1 \;=\; b(n-1)+j-1 \;\ge\; 2n+j-3
\]
for $b\ge2$.
\end{proof}

\begin{proof}[Proof of Theorem~\ref{thm:red}]
\emph{Upper bound.} Cut off a block $(n-1)\times n\times n$ (cost
$n^2$) and tile the remaining slab $1\times n\times n$ as a 2D
problem: a thickness-1 box $1\times a\times b$ has distinct dimension
triple iff the pair $\{a,b\}$ is distinct among the pieces (removing
one entry $1$ from a multiset is well defined), so the optimal 2D
tiling with distinctness transfers and
$I\le n^2+W^*(n,k-1)$.

\emph{Lower bound.} Let a partition of the cube into $k$ distinct
boxes be given.
If it has no fault plane, Lemma~\ref{lem:columns} gives $I\ge2n^2$,
and $W^*(n,k-1)\le n^2$ (for any tiling,
$a+b\le ab+1$ gives $\sum(a_i+b_i)\le n^2+(k-1)$, and
$W^*=\sum(a_i+b_i)-2n\le n^2+(k-1)-2n\le n^2$ since $k-1\le 2n$), so
$I\ge n^2+W^*$.
If it has a fault plane, $I\ge n^2+I_A+I_B$ with thicknesses $a+b=n$
and $k_A+k_B=k$ pieces. If one side is a single block ($k_A=1$,
$I_A=0$): for $b=1$ the other side is a 2D tiling with inherited
distinctness, $I_B\ge W^*(n,k-1)$ (this is the saturating case,
where equality is attained); for $b\ge2$ the Slab lemma gives
$I_B\ge 2n+(k-1)-3=2n+k-4\ge W^*(n,k-1)$ by $(*)$. If both sides
have $\ge2$ pieces: each side of thickness $1$ has 2D wall
$\ge n+k_i-2$ (Lemma~\ref{lem:2d}), and each side of thickness
$\ge2$ has $I\ge 2n+k_i-3\ge n+k_i-2$ (Slab lemma), so
$I_A+I_B\ge 2n+k-4\ge W^*$ by $(*)$.
\end{proof}

\begin{remark}
The identity $W=\sum_i(a_i+b_i)-2n$ used above follows by counting
walls along unit strips: in each of the $n$ vertical strips, the
$c_j$ pieces crossing it cut it into $c_j$ segments, contributing
$c_j-1$ horizontal wall units; summing, $W_h=\sum_i a_i-n$ and
$W_v=\sum_i b_i-n$. Minimizing $W$ is thus minimizing the total
semiperimeter of the tiles.
\end{remark}

\begin{remark}
Outside the range of Theorem~\ref{thm:red} the equality can fail to
be defined ($W^*(3,4)$ does not exist, yet the $3^3$ cube does split
into five distinct boxes, with $I=19$) and finitely many deep-regime
corners violate $(*)$ (e.g.\ $n=5$, $k=7$: $W^*(5,6)=14>13=2n+k-4$);
these corners are verifiable by enumeration, and indeed
$I(5,7)=25+14=39$, so the reduction formula itself holds there too.
\end{remark}

\section{The doubling law in the middle regime}\label{sec:doubling}

The 2D quantity $W^*(n,m)$ has the same regime structure as the 3D
problem. In the \emph{high regime} $n\ge T_{m-1}=(m-1)m/2$, the comb
construction (one $(n-1)\times n$ piece plus $m-1$ sticks
$1\times w_i$ with distinct widths summing to $n$) gives
$W^*(n,m)=n+m-2$, and the threshold is exactly the triangular number:
$m-1$ distinct positive widths sum to at least $T_{m-1}$. Below the
threshold the value jumps by $m-2$:

\begin{proposition}[$m=4$]\label{prop:m4}
Every tiling of the $n\times n$ square, $4\le n\le5$, by four
rectangles of pairwise distinct dimensions has $W\ge n+4$; hence
$W^*(n,4)=n+4$ for $n=4,5$.
\end{proposition}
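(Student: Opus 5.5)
We prove the lower bound by working with the semiperimeter, using the identity of the Remark following Theorem~\ref{thm:red}: $W=\sum_i(a_i+b_i)-2n$. So $W\ge n+4$ is equivalent to $\sum_i(a_i+b_i)\ge 3n+4$ for four distinct rectangles tiling the $n\times n$ square with $n\in\{4,5\}$. The upper bound $W^*(n,4)\le n+4$ needs one explicit tiling for each $n$. For $n=4$, for example: $4\times 2$ (the bottom half), then $1\times 2$, $2\times 2$ and $1\times 2$ as strips on top. But these repeat dimensions, so instead take $4\times2$, $3\times2$, $1\times1$, $1\times1$ and fix it by a small search. We will exhibit a valid tiling with semiperimeter sum $3n+4$ for each of $n=4,5$, found by hand.

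For the lower bound we split on the structure of the four-piece tiling, which is very restricted.

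\emph{No fault line.} By the argument of Lemma~\ref{lem:2d}, $W\ge 2n\ge n+4$ for $n\ge4$. Here Lemma~\ref{lem:4tiling} even shows that a four-piece fault-free tiling with each corner in a distinct piece cannot exist.

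\emph{A fault line exists.} Say it is horizontal at height $h$. It splits the square into an $n\times h$ rectangle and an $n\times(n-h)$ rectangle carrying $m_1+m_2=4$ pieces.

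\emph{Split $(1,3)$.} One side is a single $n\times h$ piece. The other side, $n\times(n-h)$, is tiled by three distinct rectangles. A three-piece tiling of a rectangle is either three parallel strips or a strip plus a two-piece split. We minimize $\sum(a_i+b_i)$ over these. Parallel vertical strips of height $n-h$ with distinct widths $w_1,w_2,w_3$ summing to $n$ require $n\ge6$, which fails. So either the strips are horizontal with distinct heights, forcing $n-h\ge6$, impossible, or the pieces have equal widths with different heights (equal heights being excluded). The allowed configurations are therefore a full-width strip plus a side split in two, or the L-shaped variants. Each is a finite check over $h$ and cut positions with $n\le5$. In each we show the wall is at least $n+(n-h)+\ldots$, i.e. the distinctness forces a second long cut of length about $n-h$ or $n$, giving $\ge n+4$. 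Distinctness against the big $n\times h$ piece must also be checked (e.g.\ a full-width strip $n\times h'$ must have $h'\ne h$).

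\emph{Split $(2,2)$.} Both sides are split by a cut of length equal to their thickness, giving $W=n+h+(n-h)=2n\ge n+4$.

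The main obstacle is the $(1,3)$ case with a secondary fault, where $W=n+(\text{length }n\text{ or }n-h)+\ldots$. The worry is a configuration achieving $n+3$, namely thin strips of length $1$ and $2$. Here we exploit distinctness and the small range of $n$: a wall of total $n+3$ means the three-piece part has wall $3$. For $n-h=1$ this is a $1\times n$ strip cut into three distinct lengths, needing $n\ge6$. For $n-h=2$ a wall of $3$ is impossible: one cut of length $2$ plus one of length $1$ gives pieces $2\times w$, $1\times u$, $1\times u'$ with $u+u'$ equal to the remaining width, and enumerating $n\le5$ always collides with another dimension or with $n\times h$. A tiny table for $n=4,5$ will close this case.
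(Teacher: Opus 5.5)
Your lower-bound route is the paper's: either no fault line (then $W\ge 2n$), or a $(2,2)$ split costing at least $h+(n-h)$, or a $(1,3)$ split reduced to guillotine three-piece strips where distinctness does the work. Organizing the $(1,3)$ case by the strip thickness $n-h$, instead of by the shapes (a)/(b)/(c), is a legitimate alternative. As written, though, the proposal has two concrete holes.

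\textbf{The upper bound is missing.} The bound $W^*(n,4)\le n+4$ is never established. Both tilings you write down repeat a dimension pair: the first has $1\times 2$ twice, and your ``fix'' $4\times2,3\times2,1\times1,1\times1$ still has $1\times1$ twice. You then defer to an unspecified search. A valid tiling is $(n,n-3),(n-1,3),(1,1),(1,2)$: a full-width strip of height $n-3$, and over it an $(n-1)\times 3$ block beside a $1\times 3$ column cut into heights $1$ and $2$. For $n=4,5$ the four pairs are distinct and $\sum(a_i+b_i)=3n+4$.

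\textbf{The $(1,3)$ case is not carried out.} This case is the whole content of the proposition, yet it is left as ``a finite check'' and ``a tiny table''. The one subcase you spell out, $n-h=2$, is misdescribed. Pieces $1\times u$, $1\times u'$ with $u+u'$ equal to the remaining width do not tile the remaining $(n-w)\times 2$ region. The obstruction is also not a collision with $n\times h$. Here is an argument that closes the case:
\begin{itemize}
\item The three-piece strip is guillotine, so its wall is at least the first cut plus $1$, i.e.\ at least $(n-h)+1$.
\item Hence only thickness $n-h\le 2$ can give strip wall $\le 3$. You must rule out wall $\le 3$, not just $=3$.
\item Thickness $1$ is impossible: three distinct sticks need $n\ge 1+2+3=6$.
\item Thickness $2$, three vertical strips: impossible for the same reason.
\item Thickness $2$, full-width first cut: this costs at least $n+1\ge 5$.
\item Thickness $2$, a column $w\times 2$ plus the remaining column split horizontally: the two split pieces share the width $n-w$ and have heights summing to $2$. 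So they are two identical $(n-w)\times 1$ pieces, which violates distinctness.
\end{itemize}
This is exactly the paper's shape-(b) mechanism. The two pieces of a split column share a width, so their heights must differ, which forces thickness $\ge 3$ and wall $\ge n+3+1$.

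A minor point: in the $(2,2)$ case, the cut in each strip has length \emph{at least} its thickness, since it could be horizontal of length $n$. So write $W\ge 2n$ rather than an equality.
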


\begin{proof}
Every tiling by at most four rectangles has a fault line (the
smallest non-guillotine tiling is the five-piece pinwheel). WLOG the
fault is horizontal, splitting into strips $n\times h_1$, $n\times
h_2$ with piece counts $(2,2)$ or $(1,3)$.

$(2,2)$: a strip with two pieces costs at least its cheapest cut
(vertical, length $h_i$), so $W\ge n+h_1+h_2=2n\ge n+4$ iff $n\ge4$.

$(1,3)$: one strip is a single piece $(n,h_1)$; the other carries
three distinct pieces. Three-piece tilings of a rectangle are
guillotine, of three shapes: (a) three columns of height $h_2$;
distinctness forces distinct widths, so $n\ge1+2+3=6>n$: impossible
(the triangular-threshold mechanism). (b) one column $(w,h_2)$ plus
the remaining column split into two pieces of common width $n-w$;
those two share a width, so their heights differ, forcing $h_2\ge3$;
wall $= n+h_2+(n-w)\ge n+3+1=n+4$. (c) first cut horizontal (a
full-width piece): wall $\ge n+n+1\ge n+5$.

Equality $n+4$ is realized by (b) with $h_2=3$, $w=n-1$: pieces
$(n,n-3),(n-1,3),(1,1),(1,2)$, matching the DP optima $W^*(4,4)=8$,
$W^*(5,4)=9$.
\end{proof}

\begin{proposition}[$m=5$]\label{prop:m5}
Every tiling of the $n\times n$ square, $4\le n\le9$, by five
rectangles of pairwise distinct dimensions has $W\ge n+6$; hence
$W^*(n,5)=n+6$ throughout the middle regime.
\end{proposition}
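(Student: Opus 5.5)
We imitate the proof of Proposition~\ref{prop:m4}. We split on whether the tiling has a fault line, and use two tools throughout. The first is the semiperimeter identity $W=\sum_i(a_i+b_i)-2n$ (Remark after Theorem~\ref{thm:red}). The second is the triangular-threshold mechanism: $j$ pieces sharing a common side must differ in the other side, so a strip of width $w$ cut into $j$ such pieces needs $w\ge 1+\dots+j$.

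\emph{Step 1: no fault line.} Assume the tiling has no fault line. The proof of Lemma~\ref{lem:2d} already gives $W\ge 2n$, which is $\ge n+6$ for $n\ge6$. For $4\le n\le5$ we need more. With five pieces and no fault line, the tiling is the pinwheel: four corner rectangles winding around a central rectangle. In the pinwheel every row and every column of unit cells meets at least two pieces, and the rows and columns crossing the central piece meet three. So $W\ge 2n+(\text{width}+\text{height of centre})\ge 2n+2$, which is $\ge n+6$ for $n\ge4$. This needs the refinement of the row/column count over the central strip, which we will check directly.

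\emph{Step 2: a fault line exists.} Take the fault line horizontal, splitting the square into strips $n\times h_1$ and $n\times h_2$ carrying $(m_1,m_2)\in\{(1,4),(2,3)\}$ pieces, up to order.

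For $(2,3)$: each strip with at least two pieces needs internal wall at least its cheapest full cut. The two-piece strip costs $\ge h_1$. The three-piece strip costs $\ge h_2+1$, since a three-piece tiling of a rectangle has a cut plus one more unit. Hence $W\ge n+h_1+h_2+1=2n+1\ge n+6$ once $n\ge5$. For $n=4$ we refine, using the fact that the distinctness constraints force extra length.

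For $(1,4)$: one piece is $n\times h_1$, and a strip $n\times h$ with $h=h_2<n$ carries four distinct pieces. We must show that its wall is at least $6$. Four-piece tilings of a rectangle are guillotine, so we enumerate them by cut trees, as in the $(1,3)$ case of Proposition~\ref{prop:m4}. There are four shapes.
\begin{itemize}
\item[(a)] Four full-height columns. Distinctness forces distinct widths, so $n\ge 10$: impossible.
\item[(b)] A first vertical cut, then sub-strips. Pieces sharing a height or width force the splits to have length at least $1+2$ or $1+2+3$, so the wall is at least $h+$ (something) $\ge 6$ when $h\ge3$. When $h\le2$ the same-height sticks violate the threshold.
\item[(c)] A first horizontal cut inside the strip. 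This costs $n$, plus at least $2$ more, so $W\ge 2n+2$.
\item[(d)] Also possible: the first cut is a full-height column, and the remainder is cut horizontally into pieces of common width. These pieces need distinct heights summing to $h$.
\end{itemize}
In each shape the wall is $\ge 6$ after using distinctness. The sharp case is a column of width $n-1$, with the last unit column cut into three sticks of heights $1,2,3$, forcing $h\ge6$. This fits only for $n\ge7$ and gives exactly $h+2\cdot1$ plus the cut. We compute it carefully to get $n+6$.

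\emph{Step 3: upper bound.} We exhibit tilings achieving $n+6$ for each $4\le n\le9$. We expect to use either the comb-type structures above or a $(2,3)$ split with a $1,2$ stick pair. These should match DP values.

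\emph{Main obstacle.} The main difficulty is the $(1,4)$ sub-case enumeration together with the small-$n$ ($n=4,5$) refinements in Steps 1 and 2. There the crude bounds $2n$ and $2n+1$ fall short of $n+6$, and distinctness must be invoked explicitly on each shape, using the semiperimeter form of $W$ to bound the cost.
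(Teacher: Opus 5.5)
Your skeleton is the paper's: the pinwheel count $W=2n+c_w+c_h$ when there is no fault line, then the $(1,4)$ and $(2,3)$ splits, with $W\ge 2n+1$ closing $(2,3)$ for $n\ge 5$. The gap is in the $(1,4)$ case, which is exactly where the bound is tight.

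\textbf{The $(1,4)$ case.} In shape (b) you assert that for $h\le 2$ ``the same-height sticks violate the threshold''. For $h=2$ this is false. Take a column $(n-L)\times 2$ beside an $L\times 2$ box, cut into a top stick $(L,1)$ and bottom sticks $(b_1,1),(b_2,1)$ with $b_1\ne b_2$. This is legal as soon as $L=3$: the widths $3,1,2$ are distinct, and the triangular threshold only applies when all the sticks lie in one row of length $n$. Its strip wall is $2+L+1=L+3=6$. For $5\le n\le 9$ the tiling $(n,n-2),(n-3,2),(3,1),(1,1),(2,1)$ has $W=n+6$. This is the extremal configuration, and your argument discards it instead of bounding it.

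What the case $h=2$ needs is this: a $w\times 2$ sub-box can never be cut horizontally into exactly two pieces, since they would be identical $w\times 1$ sticks. So the only shapes are:
\begin{itemize}
\item four columns, which is impossible;
\item one column beside three sticks over a span $L\ge 3$, with strip wall $L+3\ge 6$;
\item a full horizontal cut with sticks in both rows, with strip wall $\ge n+2\ge 6$.
\end{itemize}

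Your announced ``sharp case'' is a unit column cut into heights $1,2,3$, so $h\ge 6$. It has strip wall $h+2\ge 8$, i.e.\ $W\ge n+8$, so it is not sharp. The case $h=3$ is also only asserted (``$h+$ (something)''). It needs the check that two further unit cuts would yield either an impossible width-one stack of three distinct heights summing to $3$, or duplicate $(1,1),(1,2)$ pieces.

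\textbf{The upper bound.} Step 3 cannot be completed for $6\le n\le 9$ with the constructions you list:
\begin{itemize}
\item the $h\ge 6$ comb gives $n+8$;
\item any $(2,3)$ split gives at least $2n+1>n+6$;
\item a pinwheel gives $2n+2>n+6$.
\end{itemize}
For these $n$ the value $n+6$ is attained only by the $h=2$ configuration you excluded.

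\textbf{The case $n=4$ of $(2,3)$.} Your refinement here is only promised. The precise point is that $W=9$ forces $W_1=h_1$ and $W_2=h_2+1$. That pins down $h_1=1$, $h_2=3$, a $3\times 3$ piece and a width-one stack $(1,1),(1,2)$. Each strip on its own is distinct-valid. The branch dies only by \emph{joint} distinctness across the fault line: the one-row strip must be $(1,1)+(3,1)$ or $(2,1)+(2,1)$. So ``distinctness forces extra length'' must be applied across the two strips, which your sketch does not identify.
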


\begin{proof}
\emph{No fault line:} the only five-piece non-guillotine tilings are
pinwheels; for a pinwheel with central piece $c_w\times c_h$,
$\sum(a_i+b_i)=4n+c_w+c_h$, so $W=2n+c_w+c_h\ge2n+2\ge n+6$ iff
$n\ge4$.

\emph{Fault with split $(1,4)$:} the four-piece strip of height $h_2$
satisfies $W_{\mathrm{strip}}\ge6$: for $h_2=1$, four sticks of
distinct widths need $n\ge T_4=10$, impossible; for $h_2=2$, a column
cannot split in two (two identical $w\times1$ pieces), so the shapes
are four columns ($n\ge10$, impossible), one column plus three sticks
over a span $L$ (top $(L,1)$, bottom $(b_1,1)+(b_2,1)$ with
$b_1+b_2=L$ and distinct widths force $L\ge3$):
$W_{\mathrm{strip}}=L+3\ge6$, or four sticks in two rows with cuts at
distinct positions: $W_{\mathrm{strip}}=n+2\ge6$; for $h_2=3$, stacks
of three same-width pieces would need distinct heights summing to
$\ge6>3$, impossible, and the remaining shapes cost $\ge n+3\ge6$;
for $h_2\ge4$, three cuts, the first of length $\ge h_2\ge4$ and two
more $\ge1$: $W_{\mathrm{strip}}\ge6$.

\emph{Fault with split $(2,3)$:} $W=n+W_1+W_2$ with $W_1=h_1$
(vertical cut) or $n$ (horizontal), and $W_2$ given by shapes
(a)/(b)/(c) of the three-piece strip from
Proposition~\ref{prop:m4} (plus the all-sticks shape $h_2=1$, cost
$2$, requiring $n\ge6$). All branches give $\ge n+6$ for $n\ge5$. For
$n=4$ the only dangerous branch ($h_1=1$, $h_2=3$, shape (b) with
$w'=1$, total $2n+1=9$) dies by joint distinctness: the two-piece
strip would need $(1,1)+(3,1)$ or $(2,1)+(2,1)$, but $(1,1)$
duplicates the piece forced by shape (b) with $w'=1$, and
$(2,1)+(2,1)$ is a duplicate in itself. The surviving branches give
$\ge10=n+6$, attained by $(2,3),(2,1),(2,2),(1,1),(3,1)$, the DP
optimum $W^*(4,5)=10$.
\end{proof}

\begin{proof}[Proof of Theorem~\ref{thm:double}]
The values of $W^*$ are Propositions~\ref{prop:m4} and~\ref{prop:m5}
(upper bounds by the explicit tilings displayed). Condition $(*)$ of
Theorem~\ref{thm:red} reads $n+4\le 2n+1$ ($k=5$, true for $n\ge3$)
and $n+6\le2n+2$ ($k=6$, true for $n\ge4$), so the reduction applies
and $I(n,5)=n^2+n+4$ ($n=4,5$), $I(n,6)=n^2+n+6$ ($4\le n\le9$).
\end{proof}

The mechanism behind the doubling is visible in the proofs: in the
middle regime the comb's sticks no longer fit with distinct widths,
and each stick that must ``thicken'' to height $\ge2$ costs exactly
$+1$ in total semiperimeter, turning $W=n+(m-2)$ into $n+2(m-2)$.

\begin{proposition}[$m=6$, by exhaustive certificate]\label{prop:m6}
$W^*(n,6)=n+8$ for all $10\le n\le14$, and indeed for $8\le n\le14$.
Consequently, by Theorem~\ref{thm:red}, the middle regime of the
column $k=7$ is settled: $I(n,7)=n^2+n+8$ for $8\le n\le14$.
\end{proposition}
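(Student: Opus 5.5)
The plan has two parts: an explicit construction for the upper bound $W^*(n,6)\le n+8$, and an exhaustive certificate for the lower bound. The consequence for $k=7$ then follows by checking condition $(*)$ of Theorem~\ref{thm:red}.

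\emph{Upper bound.} Generalize the doubling tilings of Propositions~\ref{prop:m4} and~\ref{prop:m5}. Use a full-width strip $(n,n-h)$ above a strip of height $h$. Tile the lower strip by a column $(n-w,h)$ and a small block of width $w$ cut into four pieces of distinct dimensions. By the Remark identity, it is enough to find a configuration whose total semiperimeter is $\sum(a_i+b_i)=3n+8$. One family I would try has $h=3$: the pieces are $(n,n-3)$, $(n-w,3)$, and a $w\times 3$ block split into four distinct rectangles. I would choose $w$ so that all six pieces are pairwise distinct for every $8\le n\le 14$, and adjust by hand where a coincidence such as $n-w=w$ occurs. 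Computing $W=\sum(a_i+b_i)-2n$ for the chosen family should give $n+8$.

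\emph{Lower bound.} Follow the structure of Proposition~\ref{prop:m5}, with the DP enumeration as the certificate. Every tiling is split first by whether it has a fault line.
\begin{itemize}
\item With no fault line, the tiling contains a pinwheel. Then $W\ge 2n+2\ge n+8$ once $n\ge 6$, which covers the whole range.
\item With a fault line, write $W=n+W_1+W_2$ over the splits $(1,5)$, $(2,4)$ and $(3,3)$. The strip bounds then reduce to smaller strip problems with distinctness.
\item The $(1,5)$ split is the critical branch. Its five-piece strip of height $h_2$ must have wall at least $8$. For $h_2=1$ this holds because five distinct sticks need $n\ge T_5=15$; this is the triangular threshold and is exactly why the range stops at $14$. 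For $h_2\ge 2$, each stick forced to thicken costs $+1$.
\end{itemize}
Because the number of strip shapes grows, I would not enumerate by hand. Instead I would run the dynamic program over rectangles $(p,q)$, piece count and sets of used dimension pairs, computing minimal semiperimeter for guillotine tilings and adding the pinwheel cases. Its output for $n=8,\dots,14$ serves as the certificate.

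\emph{Consequence for $k=7$.} Condition $(*)$ becomes $n+8\le 2n+3$, which holds for $n\ge 5$. We also need $k=7\le n+2$, which holds for $n\ge 5$. So Theorem~\ref{thm:red} gives $I(n,7)=n^2+n+8$ for $8\le n\le 14$.

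The main obstacle is the completeness of the certificate, especially handling non-guillotine tilings for six pieces. Pinwheels can now sit inside a strip with a sixth piece attached, so the DP must include pinwheel sub-tilings of sub-rectangles, not only of the whole square. I would settle this first by listing the non-guillotine six-piece tilings: a pinwheel plus one piece glued along a full side, or a pinwheel with one tile split. Each of these has wall at least $2n$ or $n+2n'$ for the sub-rectangle it occupies, which exceeds $n+8$ for $n\ge 8$.
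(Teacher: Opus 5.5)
\paragraph{The failing step: your explicit upper bound.}
In your $h=3$ family the wall is $n+3+W_{\mathrm{block}}$. You would therefore need a $w\times 3$ block cut into four pairwise distinct rectangles with internal wall $5$, and this never happens. A four-piece tiling of the block is guillotine with three cuts.
\begin{itemize}
\item If the first cut is vertical (length $3$), the other two cuts must have length $1$. That forces horizontal cuts of width-$1$ columns, which produces $(1,1),(1,2)$ twice or $(1,1)$ twice.
\item If the first cut is horizontal (length $w$), then $w\in\{2,3\}$. For $w=2$, every completion repeats $(1,1)$ or $\{1,2\}$. For $w=3$, the $3\times 1$ strip would need three sticks of distinct widths summing to $3$, which is impossible.
\end{itemize}
So every member of your family has $W\ge n+9$, whatever $w$ you choose, and no hand adjustment repairs it.

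The template does work with $h=2$. Take the pieces $(n,n-2)$ and a column $(c_1,2)$. Add a block of width $c_2+3$ made of a column $(c_2,2)$ beside a $3\times 2$ cell cut into $(3,1)$ over $(1,1),(2,1)$. The wall is $n+2+2+3+1=n+8$. Distinctness only needs $c_1\ne c_2$, $c_1,c_2\ge 2$ and $c_1+c_2=n-3$, which is possible exactly when $n\ge 8$.

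\paragraph{The rest of your plan: valid, but a different route from the paper's.}
The paper enumerates all six-piece distinct tilings, guillotine or not, by first-empty-cell placement with the frontier bound. Both bounds come out of that single search, and it needs no classification of non-guillotine tilings.

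Your guillotine DP with used-pair sets is lighter. Its optimal tilings would also supply the upper bound, so the hand construction is only a convenience. The DP is complete for $n\ge 8$ because non-guillotine tilings cannot go below $n+8$. Either there is no fault line, and then $W\ge 2n\ge n+8$. Or a fault line cuts off one piece, and the other five tile a full-width $n\times n'$ strip with no internal fault line.

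In that second case, the bound you quote, $n+2n'$, is the wrong one: it is only $n+6$ at $n'=3$. The strip is $n$ wide, so its own wall is at least $n+n'+2$, and the total is at least $2n+n'+2$. Together with the construction's constraint $n\ge 8$, this also explains the lower end $n=8$ of the stated range.

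Your derivation of the $k=7$ consequence, via $(*)$ and $k\le n+2$, is the same as the paper's.
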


\begin{proof}
The middle regime of $m=6$ is the finite range $T_4=10\le n<T_5=15$
(the values $n=8,9$ belong to the next regime down but satisfy the
same formula). For each $8\le n\le14$ we enumerated \emph{all}
tilings of the $n\times n$ square by six rectangles with pairwise
distinct dimensions, guillotine or not, by canonical
first-empty-cell placement with the exact frontier lower bound as
pruning; the minimum internal wall equals $n+8$ in every case (the
largest case, $n=14$, takes under four minutes). Condition $(*)$ of
Theorem~\ref{thm:red} holds ($n+8\le2n+3$ for $n\ge5$), which gives
the $k=7$ statement.
\end{proof}

For general $m$ the doubling remains a conjecture:

\begin{conjecture}\label{conj:double}
For every $m\ge4$ and every $n$ in the middle regime
$T_{m-2}\le n<T_{m-1}$, $W^*(n,m)=n+2(m-2)$. It is now a theorem for
$m\in\{4,5\}$ (case analysis) and $m=6$ (exhaustive certificate).
\end{conjecture}

\section{Computational certificates}\label{sec:certificates}

The values for $3\le n\le 10$ rest on exhaustive branch-and-bound
searches. The search fixes the corner piece (and, in hard subtrees,
also the second piece), places pieces greedily at the first empty
cell, and prunes with the exact \emph{frontier bound}: the contact
area between the placed region and the empty region is a valid lower
bound on the future interface, because every such unit face will
separate two pieces in any completion. This bound reduces the search
tree by four orders of magnitude (for $n=5$, from $5.8\cdot 10^8$ to
$4.2\cdot 10^4$ nodes) and makes the certificates routine: the case
$n=10$ comprises $220$ first-piece subtrees and roughly $2\cdot
10^{11}$ nodes, with no counterexample below the claimed optimum. The
middle regime values $T(n,6)=8n^2+2n+12$ for $5\le n\le 9$ were
certified in the same way before Theorem~\ref{thm:double} was proved;
they now stand as independent verification of the proof chain
Slab lemma $\to$ Reduction $\to$ Doubling law. The fault-free minima
$21,36,55,78$ ($n=3,\dots,6$) verifying Theorem~\ref{thm:ff} come
from exhaustive enumeration of all fault-free partitions.

\section{The family}\label{sec:family}

\begin{theorem}\label{thm:family}
For every $k\ge 4$ and every $n\ge (k-2)(k-1)/2$,
\[
T(n,k)=8n^2+2n+2(k-3).
\]
The threshold $(k-2)(k-1)/2$ is the least possible sum of $k-2$
distinct positive integers.
\end{theorem}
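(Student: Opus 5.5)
The plan is to sandwich $T(n,k)$ between a construction and the two lower bounds already proved, translating everything through $T(n,k)=6n^2+2I$. The target is exactly $I=n^2+n+(k-3)$.

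\emph{Upper bound.} Lemma~\ref{lem:construction} applies verbatim in the stated range $n\ge (k-2)(k-1)/2$. It gives the block, the plate, and $k-2$ sticks of distinct lengths, hence $I\le n^2+n+(k-3)$ and $T(n,k)\le 8n^2+2n+2(k-3)$. The threshold remark in the theorem is the elementary fact that $k-2$ distinct positive integers sum to at least $1+2+\dots+(k-2)$. Every sum $n$ at or above this value is attained, for instance by enlarging the largest part.

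\emph{Lower bound.} Split according to whether the partition has a fault plane.
\begin{itemize}
\item With no fault plane (and $k\ge 4\ge 2$ pieces), Lemma~\ref{lem:columns} gives $I\ge 2n^2$. This exceeds $n^2+n+(k-3)$ as soon as $n^2>n+k-3$. That holds because the threshold gives $n\ge (k-2)(k-1)/2\ge k-2$ for $k\ge4$, so $n+k-3\le 2n-1<n^2$ for $n\ge 2$.
\item With a fault plane, Lemma~\ref{lem:cascade} gives directly $I\ge n^2+n+(k-3)$ under exactly the hypotheses $k\ge4$ and $n\ge(k-2)(k-1)/2$.
\end{itemize}
Combining the two cases yields $I(n,k)\ge n^2+n+(k-3)$, and with the upper bound the equality follows. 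Distinctness is used only in the construction, consistent with the abstract's remark.

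\emph{Main check.} The step needing most care is confirming that Lemma~\ref{lem:cascade}'s internal inequalities hold uniformly in $k$. Three things must be verified:
\begin{itemize}
\item The $b\ge2$ branch needs $2n\ge n+k-3$, i.e.\ $n\ge k-3$, which follows from the triangular threshold.
\item The $b=1$ branch invokes Lemma~\ref{lem:2d} with $m=k-1$, requiring $n\ge m-2=k-3$, which is the same inequality.
\item The case with both sides carrying at least two pieces gives $2n^2$, which is fine as above.
\end{itemize}
I expect no genuine obstacle beyond this bookkeeping: the argument is a direct assembly of Lemmas~\ref{lem:construction}, \ref{lem:columns} and~\ref{lem:cascade}.
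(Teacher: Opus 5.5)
Your proposal is correct and follows the paper's proof: the upper bound comes from Lemma~\ref{lem:construction}, and the lower bound splits into two cases. With no fault plane, Lemma~\ref{lem:columns} applies and $n\ge k-2$ gives $2n^2\ge n^2+n+(k-3)$; with a fault plane, Lemma~\ref{lem:cascade} applies. Your extra checks of the internal inequalities in the cascade lemma are accurate, but they are already covered by that lemma's hypotheses.
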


\begin{proof}
The upper bound is Lemma~\ref{lem:construction}. For the lower bound,
a partition either has no fault plane, and then
Lemma~\ref{lem:columns} gives $I\ge 2n^2\ge n^2+n+(k-3)$ because the
threshold implies $n\ge k-2$, or it has one, and then
Lemma~\ref{lem:cascade} applies. Note that the lower bound holds for
all box partitions, distinct or not; distinctness enters only through
the sticks of the construction, hence the triangular threshold.
\end{proof}

\begin{remark}[the role of distinctness]\label{rem:distinct}
The same argument settles the unrestricted problem. Let
$T^{\mathrm{arb}}(n,k)$ be the minimum over partitions into $k$
arbitrary (not necessarily distinct) boxes. The lower bound above
never uses distinctness, and its two ingredients are valid for every
$n\ge k-2$ (Lemma~\ref{lem:2d} needs $n\ge k-3$, and
$2n^2\ge n^2+n+(k-3)$ holds since $n^2-n\ge(k-2)(k-3)\ge k-3$); the
construction may now repeat stick lengths, requiring only
$n\ge k-2$. Hence
\[
T^{\mathrm{arb}}(n,k) \;=\; 8n^2+2n+2(k-3)
\qquad\text{for all } n\ge k-2 .
\]
Distinctness therefore does not change the optimal formula; it delays
its onset, turning the linear threshold $n\ge k-2$ into the
triangular threshold $n\ge(k-2)(k-1)/2$, and in the intermediate
range it charges an exact surcharge, which is precisely the
middle-regime phenomenon of Theorem~\ref{thm:double}. For example,
exhaustive enumeration gives $I^{\mathrm{arb}}(5,6)=33=n^2+n+3$
(sticks $1,1,1,2$), while the distinct optimum is $I(5,6)=36$: below
the triangular threshold the surcharge for distinctness is exactly
the doubling of Theorem~\ref{thm:double}.
\end{remark}

Below the threshold the picture is now settled by proof for
$k\le 6$ (Theorem~\ref{thm:double}) and, in its middle regime, for
$k=7$ as well: Proposition~\ref{prop:m6} gives
$T(n,7)=8n^2+2n+16$ for $8\le n\le 14$, in agreement with the
published terms of A393267. For $5\le n\le 7$ the deep-regime values
$T(n,7)=8n^2+2n+18$ from exact guillotine dynamic programming remain
conjecturally optimal. The general pattern is
that the first regime below the threshold obeys
$T(n,k)=8n^2+2n+4(k-3)$: forbidding the full set of distinct sticks
exactly doubles the constant, and this is now a theorem for
$k\in\{5,6\}$. Finally, some small cells are empty: no partition of
the $3\times 3\times 3$ cube into $7$ pairwise distinct boxes exists
at all (exhaustive enumeration of all $67086$ partitions into $7$
boxes).

\section*{Open problems}

(1) Conjecture~\ref{conj:double} for $m\ge7$: the middle regime of
$m=6$ fell to an exhaustive certificate
(Proposition~\ref{prop:m6}); for $m=7$ the range $T_5=15\le n\le20$
is equally finite and only computationally heavier. (2) The deep
regimes ($W^*$ below $T_{m-2}$), where the
staircase steps of $+1$ appear. (3) A bijective explanation of the
coincidence between the family interfaces $n^2+n+(k-3)$ and the
plane-region polynomials A002061, A014206, A027688. (4) The
fault-free law for $k\ne6$ pieces: the column bound gives $2n^2$ for
every $k$. Exhaustive enumeration gives $I_{\mathrm{ff}}(3,7)=22$ and
$I_{\mathrm{ff}}(4,7)=37$ (the latter over $1{,}500{,}594$
partitions), both equal to $2n^2+n+1$; and an explicit construction
shows that each further piece costs exactly one more unit for all
$n\ge4$: cutting the $1\times1$ column piece $Q_3$ of the optimal
family of Theorem~\ref{thm:ff} at an interior height crossed by
$S_1$, and then the $1\times(n-1)\times1$ piece $Q_4$ likewise,
yields fault-free partitions with
\[
I_{\mathrm{ff}}(n,7)\le 2n^2+n+1, \qquad
I_{\mathrm{ff}}(n,8)\le 2n^2+n+2 \qquad (n\ge4),
\]
verified cell-by-cell for $4\le n\le8$, and exhaustive enumeration
($13{,}194{,}237$ partitions) confirms equality at the first
non-anomalous point: $I_{\mathrm{ff}}(4,8)=38=2n^2+n+2$ exactly. We
conjecture that these are equalities in general, i.e.\
$I_{\mathrm{ff}}(n,k)=2n^2+n+(k-6)$ for $n$ large relative to $k$; the accounting framework of Section~\ref{sec:ff}
applies with a generalized corner census (every piece beyond the
sixth must float, forcing $E\ge k-6$) and is the natural route. Small
$n$ is genuinely anomalous: $I_{\mathrm{ff}}(3,8)=24=2n^2+n+3$,
because at $n=3$ every cheap cut of the column lands on the boundary
of the spanning piece $S_1$ and opens a fault.

\section*{Acknowledgments}
Parts of the search software and the manuscript preparation were
assisted by AI tools; all proofs and certificates were verified by
the author. The author thanks the OEIS editors for their careful
curation of the related entries.

\end{document}